\documentclass[pdflatex,sn-mathphys-num]{sn-jnl}

\usepackage{graphicx}%
\usepackage{multirow}%
\usepackage{amsmath,amssymb,amsfonts}%
\usepackage{amsthm}%
\usepackage{mathrsfs}%
\usepackage[title]{appendix}%
\usepackage{xcolor}%
\usepackage{textcomp}%
\usepackage{manyfoot}%
\usepackage{booktabs}%
\usepackage{algorithm}%
\usepackage{algorithmicx}%
\usepackage{algpseudocode}%
\usepackage{listings}%

\theoremstyle{thmstyleone}%
\newtheorem{theorem}{Theorem}
\newtheorem{proposition}[theorem]{Proposition}%

\theoremstyle{thmstyletwo}%
\newtheorem{remark}{Remark}%

\newtheorem{corollary}[theorem]{Corollary}

\theoremstyle{thmstylethree}%

\begin{document}

\title[Article Title]{Hodge Spectral Surrogates for Topology-Constrained Optimization}


\author*[1]{\fnm{Satoshi} \sur{Kanno}}\email{satoshi.kanno06@g.softbank.co.jp}

\author[1]{\fnm{Yoshi-aki} \sur{Shimada}}\email{yoshiaki.shimada01@g.softbank.co.jp}

\affil*[1]{
\orgdiv{Quantum Information Technology Department, Quantum Technology Division, Product Research and Development Division},
\orgname{SoftBank Corp.},
\orgaddress{
\street{1-7-1 Kaigan},
\city{Minato-ku},
\postcode{105-7529},
\state{Tokyo},
\country{Japan}
}
}


\abstract{
Topology-constrained optimization arises in applications such as medical image segmentation, geometric design, and network generation, where global structural correctness matters. Yet Betti numbers and persistent homology are discrete and combinatorial, making them difficult to control directly by gradient-based optimization. We propose a differentiable framework that replaces discrete topological counting with a soft spectral relaxation of the Hodge Laplacian. Candidate simplices are embedded in a fixed ambient complex, while simplex inclusion is represented by continuous activations, producing a smooth spectral path between different hard complexes. The soft states are used only as optimization surrogates; in the hard limit, the ordinary Hodge Laplacian and its homological zero modes are recovered. We establish operator-norm and spectral perturbation bounds connecting hard Betti numbers to soft near-zero modes, and derive differentiable objectives using heat, resolvent, and polynomial low-pass filters. Controlled experiments on Vietoris--Rips point clouds show more spatially distributed gradients, reduced scale-normalized derivative variation near persistence-pairing changes, and geometry-aware update directions in the tested settings. Experiments on graph clique complexes further show controllable shifts in sampled hard Betti regimes and compatibility with standard graph objectives. The framework provides a complementary Hodge-spectral approach to persistent-homology-based topology optimization.
}

\keywords{
Applied and computational topology,
Topological data analysis,
Hodge Laplacian,
Spectral filters,
Topology-constrained optimization}



\maketitle

\section{Introduction}

Data such as images, point clouds, and graphs often contain global topological structures, including connected components, loops, and cavities. Such structures can be quantified by Betti numbers and persistent homology and have been widely used as fundamental descriptors in topological data analysis (TDA)\cite{edelsbrunner2002topological,zomorodian2004computing,cohen2005stability,ghrist2008barcodes,edelsbrunner2008persistent,carlsson2009topology,niyogi2008finding,de2007coverage,otter2017roadmap,obayashi2022persistent,nakamura2015persistent,hiraoka2016hierarchical,adams2017persistence,hofer2017deep,chazal2014stochastic,kalivsnik2019tropical}. In recent years, however, increasing attention has been paid not only to analyzing topology but also to topology-constrained optimization, in which generated or estimated objects are optimized so as to possess desired topological properties. In image segmentation, for example, high pixel-wise accuracy does not necessarily guarantee topological correctness: structures that should be connected may become disconnected, while spurious connected components or holes may appear. Such issues are particularly important in medical imaging applications involving blood vessels, the heart, the placenta, and other anatomical structures, and methods that incorporate known topological priors into loss functions have already been investigated \cite{clough2020topological,hu2019topology,shit2021cldice,byrne2022persistent,mosinska2018beyond,stucki2023topologically,demir2023topology}. Therefore, stable incorporation of topological constraints into gradient-based optimization is important for a broad range of structural estimation and design problems, including medical image analysis.

Betti numbers themselves, however, are discrete quantities. Even a small change in optimization variables may cause simplices to appear or disappear, thereby changing the underlying homology. Consequently, directly optimizing Betti numbers using standard gradient-based methods is difficult. To address this difficulty, differentiable topological losses based on the birth--death information of persistent homology have been extensively studied \cite{poulenard2018topological,chen2019topological,hu2019topology,nigmetov2024topological,leygonie2022framework,carriere2021optimizing,clough2020topological,gabrielsson2020topology,moor2020topological,vandaele2021topologically,hofer2020graph,solomon2021fast}. These methods are powerful and have been successfully used to incorporate topological priors, particularly in image and medical-image segmentation. Nevertheless, gradients of persistence-based losses are propagated through the critical simplices that determine births and deaths and may therefore become localized around a relatively small number of simplices\cite{nigmetov2024topological,leygonie2022framework}. Moreover, changes in persistence pairings may induce corresponding changes in the optimization signal. In addition, although persistence diagrams provide a compact representation of the birth--death information of homological features, they do not explicitly retain the surrounding geometric or spectral structure.

In this work, we approach the problem from a different spectral viewpoint. According to Hodge theory, the $q$-th Hodge Laplacian $L_q(K)$ of a simplicial complex $K$ satisfies
\[
\dim \ker L_q(K) = \beta_q(K).
\]
Thus, Betti numbers can be represented spectrally as the number of zero modes of the Hodge Laplacian. This relation naturally suggests that, instead of discretely counting zero eigenvalues, one may smoothly emphasize zero and near-zero spectral modes and use them as continuous optimization signals for homological structure.

Hodge Laplacians and spectral filtering based on them are not new. Weighted Hodge Laplacians\cite{horak2013spectra,schaub2020random,baccini2022weighted}, higher-order spectral analysis\cite{krishnagopal2021spectral,schaub2020random,baccini2022weighted}, and simplicial signal processing\cite{barbarossa2020topological,schaub2021signal,yang2022simplicial,isufi2022convolutional} have already developed methods for analyzing and filtering simplex-weighted structures and Hodge spectra. Persistent Laplacians likewise provide a spectral framework for describing persistent topological information associated with a given filtration \cite{memoli2022persistent,wei2025persistent,wang2020persistent}. However, many of these approaches primarily focus on analyzing or processing spectral quantities and signals over a fixed simplicial structure.

In particular, conventional weighted Hodge formulations allow simplex weights to vary continuously over a fixed simplicial complex. However, the ability to vary an operator continuously is not equivalent to the ability to vary topology continuously. As long as the underlying simplicial support is fixed and all weights remain positive, continuously varying the weights does not change the homology of the underlying chain complex \cite{guglielmi2023quantifying,ren2021weighted}. Thus, conventional weighted Hodge frameworks naturally describe
\[
\text{fixed topology} + \text{continuous weights},
\]
but they do not directly provide a continuous representation for moving between hard complexes with different topologies using gradient-based optimization.

Our approach is therefore to avoid rebuilding the simplicial complex discretely during optimization. Instead, we fix in advance an ambient complex containing all simplices that may appear during optimization, and softly relax the presence or absence of simplices within this common ambient space. Rather than immediately deciding whether each simplex is present or absent by a binary value of zero or one, we represent its activation by a continuous weight. In this way, different hard complexes can be embedded into the same ambient space and interpolated through a continuous spectral path of the form
\[
\text{hard complex A}
\;\longrightarrow\;
\text{soft intermediate states}
\;\longrightarrow\;
\text{hard complex B}.
\]

An important point is that the soft intermediate states are not interpreted as defining a new exact topology. In the soft regime, we use the construction as a Hodge-spectral relaxation and employ its low-frequency spectral structure as an optimization signal. On the other hand, the formulation is constructed so that, in the hard limit where simplex activations approach zero or one, the ordinary simplicial complex and its Hodge Laplacian are recovered. Thus, our framework does not attempt to convert topology itself into a continuous quantity. Rather, it embeds distinct discrete topologies into a common ambient space and continuously interpolates between them through a soft spectral representation obtained by relaxing simplex inclusion. This makes it possible to treat inherently discrete topological changes within a continuous optimization framework.

Applying low-pass filters to this spectral relaxation yields differentiable topological objectives based on zero and near-zero modes. In this work, we consider a spectral-filter matching loss that compares target low-frequency spectral structure and a trace-type loss that controls low-frequency spectral mass. We apply these objectives to point-cloud Vietoris--Rips complexes and graph clique complexes and investigate their optimization behavior. In comparisons with persistent-homology-based losses, we examine not only the relation to hard topology but also the spatial distribution of gradients, behavior near persistence-pairing transitions, and update directions associated with geometric deformations. The aim of this work is not to replace persistent homology, but rather to provide a complementary Hodge-spectral optimization signal for topology-constrained optimization.

The spectral viewpoint adopted here also has a conceptual connection to quantum topological data analysis \cite{lloyd2016quantum,gyurik2022towards,ameneyro2024quantum,hayakawa2022quantum,yamauchi2025quantum}. Quantum TDA approaches encode simplicial complexes in larger Hilbert spaces and estimate Betti-type quantities through the kernel or low-frequency spectral subspace of combinatorial Laplacians \cite{ubaru2021quantumtopologicaldataanalysis,akhalwaya2024comparing}. In this sense, the basic viewpoint of representing a complex in a fixed ambient space and extracting topological information from the Laplacian spectrum is shared with the present framework. However, this work does not propose a quantum algorithm or claim a quantum advantage. The possible use of quantum spectral estimation for polynomial filtering, trace estimation, or gradient evaluation is regarded only as a potential direction for future computation.

The main contributions of this work are summarized as follows.

\begin{enumerate}
    \item \textbf{We construct a Hodge-spectral relaxation that allows changing simplicial complexes to be treated continuously within a fixed ambient space.}
    Simplex inclusion itself is represented softly, while the ordinary Hodge Laplacian and homology are recovered in the hard limit.

    \item \textbf{We construct differentiable topology-control objectives based on the low-frequency Hodge spectrum.}
    Discrete zero-mode counting is replaced by smooth spectral surrogates suitable for gradient-based optimization.

    \item \textbf{We investigate the properties of the proposed optimization signals using point clouds and graph clique complexes.}
    Through controlled numerical experiments, we evaluate their relation to hard topology, their differences from persistent-homology-based losses, and their compatibility with conventional graph objectives.
\end{enumerate}

In summary, this work does not propose the Hodge Laplacian or spectral filtering itself as a new technique. Rather, it provides a framework for incorporating homological constraints into continuous optimization by embedding different hard topologies into a common ambient complex and continuously interpolating between them through a soft spectral representation.

\section{Preliminaries and Problem Setting}

In this section, we summarize the topological quantities considered in this work and the difficulties that arise when using them as loss functions.
We first introduce clique complexes and Betti numbers constructed from graphs, and then describe their relationship with weighted graph Laplacians and classical spectral relaxation.
We subsequently introduce persistent homology for point-cloud data and loss functions based on persistence diagrams, and finally summarize the main issues that arise when such losses are used for optimization.

\subsection{Clique Complexes and Betti Numbers}

We first introduce the Betti numbers of clique complexes as topological quantities for graph data.
Let \(G=(V,E)\) be a graph, where \(V\) is the set of vertices and \(E\) is the set of edges.
The clique complex \(X(G)\) of \(G\) is a simplicial complex obtained by regarding each clique in the graph as a simplex.
That is, a subset of vertices
\[
\sigma=\{v_0,\ldots,v_k\}\subset V
\]
is a \(k\)-simplex if, for any distinct \(i,j\),
\[
(v_i,v_j)\in E
\]
holds, or equivalently, if \(\sigma\) forms a \((k+1)\)-clique.
Thus,
\[
X(G)=\{\sigma\subset V:\sigma \text{ is a clique in }G\}.
\]

Let \(C_k(X(G))\) be the \(k\)-th chain group of \(X(G)\), and let
\[
\partial_k:C_k(X(G))\to C_{k-1}(X(G))
\]
be the boundary operator.
The \(k\)-th homology group is defined by
\[
H_k(X(G))
=
\ker\partial_k/\operatorname{im}\partial_{k+1},
\]
and its dimension
\[
\beta_k(X(G))=\dim H_k(X(G))
\]
is called the \(k\)-th Betti number.
In particular, \(\beta_0\) represents the number of connected components, while \(\beta_1\) represents the number of independent one-dimensional cycles that are not filled by triangles or higher-dimensional simplices.
The Betti numbers of clique complexes describe connectedness, cyclic structure, and higher-order relations in graphs.
Therefore, controlling these quantities in graph generation or network design provides one way to control redundancy and cyclic structure in networks.

\subsection{Weighted Graph Laplacians and Spectral Relaxation}

The idea of describing discrete graph structure through continuous spectral quantities has been widely used in spectral graph theory\cite{chung1997spectral,vonluxburg2007tutorialspectralclustering,lee2014multiwayspectralpartitioninghigherorder}.
Let \(B_1\) be the incidence matrix of a weighted graph, and let
\[
W_1=\operatorname{diag}(w_e)
\]
be the diagonal matrix of edge weights.
The weighted combinatorial graph Laplacian can then be written as
\[
L_0=B_1W_1B_1^{\top}.
\]

The multiplicity of the zero eigenvalue of \(L_0\) equals the number of connected components of the graph, namely \(\beta_0\).
Moreover, the smallest positive eigenvalues are related to graph connectivity and decomposability.
Through algebraic connectivity and Cheeger-type inequalities, they can be related to sparse cuts and weakly coupled structures
(see, e.g., \cite{chung1997spectral,vonluxburg2007tutorialspectralclustering,lee2014multiwayspectralpartitioninghigherorder}).

Thus, in spectral graph theory, discrete connectivity and graph partitioning problems can be continuously relaxed using the eigenvalues and eigenvectors of graph Laplacians.
In this work, we extend this viewpoint to higher-order homological structure and use the low-frequency spectra of Hodge-type operators as differentiable surrogate quantities.

\subsection{Difficulties in Using Betti Numbers as Loss Functions}

Suppose that we want to optimize a graph so that the Betti number of its clique complex approaches a prescribed target value.
For a target Betti number \(\beta_k^{\mathrm{tar}}\), one may consider, for example,
\[
L_{\beta}(G)
=
\left(
\beta_k(X(G))-\beta_k^{\mathrm{tar}}
\right)^2.
\]

However, directly optimizing this loss using standard gradient-based methods is difficult.
The reason is that the map
\[
G
\longmapsto
X(G)
\longmapsto
\beta_k(X(G))
\]
is combinatorial and discrete.
Even a small change in the presence or absence of edges can discontinuously change the set of simplices contained in the clique complex, which in turn can change the ranks of the boundary operators and the dimensions of the homology groups.

Therefore, rather than optimizing Betti numbers directly, it is necessary to construct continuous and differentiable surrogate quantities that retain a clear relationship with homology.

\subsection{Persistent Homology}

Next, we introduce persistent homology
\cite{edelsbrunner2002topological,zomorodian2004computing,cohen2005stability,edelsbrunner2008persistent,otter2017roadmap}
as a topological quantity for point-cloud data and related data types.

Suppose that, from data \(X\), we construct a sequence of simplicial complexes indexed by scale parameters
\[
r_0<r_1<\cdots<r_m
\]
such that
\[
K_{r_0}
\subset
K_{r_1}
\subset
\cdots
\subset
K_{r_m}.
\]
Such a nested sequence of simplicial complexes is called a filtration.
For example, for point-cloud data, one can use Vietoris--Rips complexes indexed by a distance scale \(r\).

At each scale \(r\), we consider
\[
H_k(K_r)
=
\ker\partial_k/\operatorname{im}\partial_{k+1}.
\]
Persistent homology tracks when each homology class is born and when it dies along the filtration.
For a homology class \(\gamma\), let \(b_\gamma\) denote its birth time and \(d_\gamma\) its finite death time.
The \(k\)-th persistence diagram is represented as
\[
\operatorname{Dgm}_k(X)
=
\{(b_\gamma,d_\gamma)\}_{\gamma}
\subset\mathbb{R}^2.
\]
The persistence of \(\gamma\) is defined by
\[
\operatorname{pers}(\gamma)
=
d_\gamma-b_\gamma.
\]
Persistence diagrams and barcode diagrams provide compact summaries of the multiscale topological structure of data and are widely used for shape analysis of point clouds, images, time-series data, and related objects.

\subsection{Loss Functions Based on Persistence Diagrams}

One way to incorporate persistent homology into optimization is to construct loss functions from persistence diagrams or barcode diagrams.
Given a target persistence diagram \(D^{\mathrm{tar}}\), one may define
\[
L_{\mathrm{PD}}(X)
=
\ell
\left(
\operatorname{Dgm}_k(X),
D^{\mathrm{tar}}
\right).
\]
For example, if \(U\) is the set of undesired homology classes, one may use
\[
L_{\mathrm{PD}}(X)
=
\sum_{\gamma\in U}
(d_\gamma-b_\gamma)^2,
\]
which acts to shorten the lifetimes of those classes.
On the other hand, if \(T\) is the set of homology classes to be preserved and \(\tau_\gamma\) is the target persistence of class \(\gamma\), one may consider
\[
L_{\mathrm{PD}}(X)
=
\sum_{\gamma\in T}
\left(
(d_\gamma-b_\gamma)-\tau_\gamma
\right)^2.
\]

In a local region where the relevant persistence pairing remains fixed and the filtration values are nondegenerate, such losses can be differentiated through the birth and death values.
Let \(\sigma_b^\gamma\) and \(\sigma_d^\gamma\) denote the critical simplices that determine the birth and death of a homology class \(\gamma\), respectively.
Then
\[
b_\gamma=f(\sigma_b^\gamma),
\qquad
d_\gamma=f(\sigma_d^\gamma)
\]
can be written in terms of the filtration function \(f\).
Accordingly, the gradient can be expressed schematically as
\[
\nabla_X L_{\mathrm{PD}}
=
\sum_\gamma
\left[
\frac{\partial L_{\mathrm{PD}}}{\partial b_\gamma}
\nabla_Xf(\sigma_b^\gamma)
+
\frac{\partial L_{\mathrm{PD}}}{\partial d_\gamma}
\nabla_Xf(\sigma_d^\gamma)
\right].
\]
Thus, gradients of persistence-based losses propagate through the critical simplices that determine birth and death values.

\subsection{Optimization Issues of Persistence-Based Losses}

Persistence-based losses provide a powerful way to incorporate topology into optimization.
However, when they are used as optimization signals, the following properties should be taken into account.

\subsubsection{Gradient Localization}

The first issue is that gradients tend to be localized on a small number of critical simplices
(see \cite{nigmetov2024topological,nigmetov2024topological_2}).

As described above, the birth and death of each homology class are determined by critical simplices
\(\sigma_b^\gamma\) and \(\sigma_d^\gamma\).
Therefore, updates mainly propagate through
\[
\nabla_Xf(\sigma_b^\gamma),
\qquad
\nabla_Xf(\sigma_d^\gamma),
\]
and hence tend to be concentrated on the vertices contained in these simplices and their neighborhoods.
On the other hand, topological structures are determined by the global configuration of the data.
For example, when a point cloud forms a large loop, the birth or death of the loop may be represented by only a small number of critical simplices, even though the geometry of the entire loop is important.
Thus, when update directions reflecting a broader geometric structure are desirable, gradient localization may become a limitation.

\subsubsection{Nonsmoothness under Pairing Changes}

The second issue is that the gradient direction can change nonsmoothly or abruptly when persistence pairings switch due to small changes in filtration values.

Let
\[
(\sigma_b,\sigma_d)
\]
be a birth--death pair associated with a homology class.
A small perturbation of the input \(X\) can change the ordering of simplex filtration values, resulting in a change of pairing of the form
\[
(\sigma_b,\sigma_d)
\longrightarrow
(\sigma'_b,\sigma'_d).
\]
In this case, the simplices through which gradients propagate also switch as
\[
\nabla_Xf(\sigma_b),
\quad
\nabla_Xf(\sigma_d)
\longrightarrow
\nabla_Xf(\sigma'_b),
\quad
\nabla_Xf(\sigma'_d).
\]
Therefore, even when the persistence diagram or the loss value changes only slightly, the update direction may change abruptly near a pairing transition.
This property is important when gradient-based optimization is considered.

\subsubsection{Limited Encoding of Geometric and Spectral Information}

The third issue is that persistence diagrams are representations specialized for summarizing the birth--death information of homology classes and do not explicitly encode surrounding geometric or spectral information.

A persistence diagram is obtained from the input data \(X\) through the map
\[
X
\longmapsto
\operatorname{Dgm}_k(X)
=
\{(b_\gamma,d_\gamma)\}_{\gamma}.
\]
This representation compactly describes the scales at which each homology class is born and dies.
However, it does not directly contain the spatial locations of the points or simplices supporting the homology classes, the surrounding neighborhood structure, or spectral information such as Laplacian eigenvectors and low-frequency modes.
For example, two homology classes may have the same persistence while differing in their arrangement in the data space, the geometric shape of the corresponding cycles, or the properties of near-closed structures.
Thus, while persistence-based losses are natural for controlling topological birth--death information, there remains room to exploit additional information when geometrically distributed and smooth update directions are desired.

\subsection[Problem Addressed in This Work]{Problem Addressed in This Work}

The preceding discussion identifies two different uses of topology.
The first is descriptive: Betti numbers or persistent homology are computed for fixed data in order to characterize global structure.
The second is prescriptive: a point cloud, graph, or model output is optimized so that it possesses a desired topology.
This work focuses on the latter setting.

For clique complexes of graphs, the map
\[
G
\longmapsto
X(G)
\longmapsto
\beta_k(X(G))
\]
is combinatorial, because the set of simplices and the ranks of the boundary operators can change discontinuously when edges are added or removed.
On the other hand, persistence-based losses for Vietoris--Rips filtrations allow differentiation through birth and death values, but exhibit gradient localization, nonsmoothness near pairing transitions, and the property that geometric and spectral information is not explicitly used.

\section{Construction of Hodge-Laplacian-type Spectral Relaxation}

In this section, we introduce a Hodge-Laplacian-type spectral relaxation for constructing differentiable topological objectives for graphs and point clouds.

The basic idea of this work is to embed simplicial complexes that vary combinatorially during optimization into a fixed ambient chain space and to replace hard simplex inclusion with continuous simplex weights. The entire construction can be represented by the sequence
\[
\theta
\longmapsto
a_e(\theta)
\longmapsto
p_e^{(\varepsilon)}(\theta)
\longmapsto
w_\sigma^{(\varepsilon)}(\theta)
\longmapsto
\widehat L_q^{(\varepsilon)}(\theta).
\]
Here, \(a_e(\theta)\) is a continuous parameter associated with an edge, \(p_e^{(\varepsilon)}\) is a soft edge activation, \(w_\sigma^{(\varepsilon)}\) is a simplex weight, and \(\widehat L_q^{(\varepsilon)}\) is a Hodge-Laplacian-type spectral relaxation. A common parameter \(\varepsilon>0\) controls the width of the softened hard threshold.

An important point is that, in the soft regime, \(\widehat L_q^{(\varepsilon)}\) is not regarded as the Hodge Laplacian of a new exact chain complex. Instead, we interpret it as a spectral operator obtained by smoothly perturbing the Hodge Laplacian of the corresponding hard complex.
Below, we quantify this soft-to-hard relationship in operator norm and show that the zero modes corresponding to the homology of the hard complex appear as near-zero low-eigenvalue modes of the soft operator.

\subsection{Fixed Ambient Complex and Hard Hodge Operator}

Let \(K_{\max}\) be a finite simplicial complex containing all candidate simplices that may appear during optimization. For example, given a candidate edge set \(E_{\max}\), one may use the clique complex
\[
K_{\max}=X(G_{\max})
\]
of the graph $G_{\max}=(V,E_{\max})$ as the ambient complex.
For each degree \(q\), define the fixed ambient chain space by
\[
C_q^{\mathrm{amb}}
=
C_q(K_{\max}).
\]
After fixing orientations, this is a finite-dimensional real vector space whose basis consists of all \(q\)-simplices contained in \(K_{\max}\).
Let \(B_q\) denote the matrix representation of the boundary operator of \(K_{\max}\). Then $B_qB_{q+1}=0$.

For a parameter \(\theta\), an edge \(e\) is defined to be active when $a_e(\theta)>0$.
Let $K(\theta)\subset K_{\max}$
be the clique complex generated by the active edge set.
For each \(q\)-simplex \(\sigma\), define its hard activation indicator by
\[
\chi_\sigma(\theta)
=
\mathbf 1_{\{\sigma\in K(\theta)\}},
\]
and define
\[
\Pi_q(\theta)
=
\operatorname{diag}
\left(
\chi_\sigma(\theta)
\right).
\]
Then
\[
C_q(K(\theta))
=
\operatorname{Im}\Pi_q(\theta)
\subset
C_q^{\mathrm{amb}}.
\]
Since \(K(\theta)\) is a simplicial complex, every face of an active simplex is also active.
We define the hard restricted boundary operator on the ambient space by
\[
B_q^{\mathrm{hard}}
=
\Pi_{q-1}B_q\Pi_q.
\]
On the active subspace, this agrees with the ordinary boundary operator of \(K(\theta)\).
On the other hand, if one constructs a Hodge operator using only the boundary terms, inactive simplex directions remain as artificial zero modes. Therefore, for \(\mu>0\), we define
\[
L_q^{\mathrm{hard}}
=
(B_q^{\mathrm{hard}})^\top B_q^{\mathrm{hard}}
+
B_{q+1}^{\mathrm{hard}}
(B_{q+1}^{\mathrm{hard}})^\top
+
\mu(I-\Pi_q).
\]

The last term assigns eigenvalue \(\mu\) to inactive \(q\)-simplex directions and removes them from the zero eigenspace.

\begin{proposition}[Consistency of the hard ambient operator]
With respect to the orthogonal decomposition
\[
C_q^{\mathrm{amb}}
=
C_q(K(\theta))
\oplus
C_q(K(\theta))^\perp,
\]
one has
\[
L_q^{\mathrm{hard}}
=
L_q(K(\theta))
\oplus
\mu I_{C_q(K(\theta))^\perp}.
\]
Consequently,
\[
\ker L_q^{\mathrm{hard}}
\simeq
H_q(K(\theta)),
\]
and
\[
\dim\ker L_q^{\mathrm{hard}}
=
\beta_q(K(\theta)).
\]
\end{proposition}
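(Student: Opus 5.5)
The argument is a block decomposition of each term of $L_q^{\mathrm{hard}}$ with respect to $C_q^{\mathrm{amb}}=\operatorname{Im}\Pi_q\oplus\ker\Pi_q$, followed by the standard Hodge isomorphism. Since $\Pi_q$ is a diagonal $0/1$ matrix, it is the orthogonal projection onto $C_q(K(\theta))$, which is spanned by the active $q$-simplices. Its complement $I-\Pi_q$ projects onto the span of the inactive simplices. Writing $\Pi=\Pi_q$ and using $\Pi^2=\Pi=\Pi^\top$, the term $(B_q^{\mathrm{hard}})^\top B_q^{\mathrm{hard}}=\Pi B_q^\top\Pi_{q-1}B_q\Pi$ is sandwiched between $\Pi$ on both sides. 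It therefore vanishes on $\ker\Pi$ and maps into $\operatorname{Im}\Pi$.

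For the up term, $B_{q+1}^{\mathrm{hard}}(B_{q+1}^{\mathrm{hard}})^\top=\Pi B_{q+1}\Pi_{q+1}B_{q+1}^\top\Pi$ has the same sandwiched form, so it is likewise block-diagonal with zero lower block. The term $\mu(I-\Pi)$ is zero on $\operatorname{Im}\Pi$ and equals $\mu I$ on $\ker\Pi$. Hence $L_q^{\mathrm{hard}}$ is block diagonal, with lower block $\mu I_{C_q(K(\theta))^\perp}$.

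Next I identify the upper block with $L_q(K(\theta))$. I restrict to $\operatorname{Im}\Pi_q$ and use the standard bases of active simplices, with orientations inherited from $K_{\max}$. The restriction of $\Pi_{q-1}B_q\Pi_q$ is then the submatrix of $B_q$ with rows indexed by active $(q-1)$-simplices and columns by active $q$-simplices. Because $K(\theta)$ is a subcomplex, every face of an active $q$-simplex is active. So the rows deleted by $\Pi_{q-1}$ carry only zero entries in active columns, and the submatrix is exactly the boundary matrix $\partial_q$ of $K(\theta)$. The same argument in degree $q+1$ shows that the restriction of $B_{q+1}^{\mathrm{hard}}$ is $\partial_{q+1}$ of $K(\theta)$. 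The upper block is therefore $\partial_q^\top\partial_q+\partial_{q+1}\partial_{q+1}^\top=L_q(K(\theta))$.

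Finally, since $\mu>0$, the kernel of the direct sum is $\ker L_q(K(\theta))\oplus 0$. The standard finite-dimensional Hodge argument applies to the chain complex of $K(\theta)$, where $\partial_q\partial_{q+1}=0$ is inherited from $B_qB_{q+1}=0$ via the subcomplex property. It gives $\ker L_q=\ker\partial_q\cap\ker\partial_{q+1}^\top$, since $x^\top Lx=\|\partial_q x\|^2+\|\partial_{q+1}^\top x\|^2$. This space is the orthogonal complement of $\operatorname{im}\partial_{q+1}$ inside $\ker\partial_q$, and hence isomorphic to $H_q(K(\theta))$. Taking dimensions yields $\beta_q$.

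The only step needing care is the face-closure argument: one must check that $\Pi_{q-1}$ discards no nonzero entries in active columns. Without face closure the restricted operator would not be a genuine boundary map, and $\partial_q\partial_{q+1}=0$ could fail.
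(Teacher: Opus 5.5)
Your proposal is correct and follows the same route as the paper: block-decompose $L_q^{\mathrm{hard}}$ over active and inactive simplices, identify the active block with $L_q(K(\theta))$ and the inactive block with $\mu I$, then apply the finite-dimensional Hodge isomorphism $\ker L_q(K(\theta))\simeq H_q(K(\theta))$. You also supply details the paper only asserts, notably the face-closure check that $\Pi_{q-1}$ and $\Pi_q$ discard no nonzero entries in active columns, which makes the projected matrices exactly the boundary matrices of $K(\theta)$ and gives $\partial_q\partial_{q+1}=0$.
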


\begin{proof}
On the active subspace, the projected boundary operators coincide with the ordinary boundary operators of \(K(\theta)\). On the inactive orthogonal complement, the boundary terms vanish and only the penalty term acts as \(\mu I\). Hence the stated direct-sum decomposition follows.
Furthermore, by the finite-dimensional Hodge decomposition,
\[
\ker L_q(K(\theta))
\simeq
H_q(K(\theta)).
\]
Therefore, the multiplicity of the zero eigenvalue is equal to \(\beta_q(K(\theta))\).
\end{proof}

\subsection{Differentiable Soft Relaxation}

We next replace the hard edge indicators by continuous activation weights.
For each candidate edge \(e\in E_{\max}\), define
\[
p_e^{(\varepsilon)}(\theta)
=
\sigma\left(
\frac{a_e(\theta)}{\varepsilon}
\right),
\qquad
\sigma(z)=\frac{1}{1+e^{-z}},
\]
where the common parameter \(\varepsilon>0\) controls the width of the soft threshold.
The weight of a simplex \(\sigma\) is defined by
$w_\sigma^{(\varepsilon)}
=
\prod_{e\subset\sigma}
p_e^{(\varepsilon)}$.
Vertices are assumed to always be present, so that $w_v^{(\varepsilon)}=1$.
This product construction is a continuous relaxation of the logical conjunction appearing in the clique condition.
For each degree \(q\), define
\[
W_q^{(\varepsilon)}
=
\operatorname{diag}
\left(
w_\sigma^{(\varepsilon)}
\right),
\qquad
R_q^{(\varepsilon)}
=
\left(W_q^{(\varepsilon)}\right)^{1/2}.
\]
We define the weighted boundary-type operator by
\[
\widetilde B_q^{(\varepsilon)}
=
R_{q-1}^{(\varepsilon)}
B_q
R_q^{(\varepsilon)},
\]
and the Hodge-Laplacian-type spectral relaxation by
\[
\widehat L_q^{(\varepsilon)}
=
(\widetilde B_q^{(\varepsilon)})^\top
\widetilde B_q^{(\varepsilon)}
+
\widetilde B_{q+1}^{(\varepsilon)}
(\widetilde B_{q+1}^{(\varepsilon)})^\top
+
\mu
\left(
I-W_q^{(\varepsilon)}
\right).
\]

In the general soft regime, $\widetilde B_{q-1}^{(\varepsilon)}
\widetilde B_q^{(\varepsilon)}
\neq0$,
and hence \(\widetilde B_q^{(\varepsilon)}\) is not the boundary operator of an exact chain complex.
Accordingly, \(\widehat L_q^{(\varepsilon)}\) is not regarded as the Hodge Laplacian of a new simplicial complex, but rather as a differentiable spectral relaxation of the hard operator \(L_q^{\mathrm{hard}}\).

Since vertices are always active,
$W_0^{(\varepsilon)}
=
R_0^{(\varepsilon)}
=
I$.
Therefore,
\[
\widehat L_0^{(\varepsilon)}
=
B_1W_1^{(\varepsilon)}B_1^\top.
\]
This is exactly the ordinary weighted combinatorial graph Laplacian with edge weights \(p_e^{(\varepsilon)}\).
Thus, at degree zero, the proposed construction coincides with classical spectral graph theory, while the higher-order operators \(\widehat L_q^{(\varepsilon)}\) can be regarded as its natural extension to higher-order structures.

\subsection{Quantitative Relation Between the Soft and Hard Operators}

We now quantify the extent to which \(\widehat L_q^{(\varepsilon)}\) can be regarded as a blurred version of the corresponding hard operator \(L_q^{\mathrm{hard}}\).
In what follows, we consider a regular parameter value \(\theta\) at which the hard combinatorial structure is fixed. That is, we assume that $a_e(\theta)\neq0$
for every candidate edge.
Define the minimum margin from the hard threshold by
\[
m(\theta)
=
\min_{e\in E_{\max}}
|a_e(\theta)|
>0.
\]
By the properties of the sigmoid function, for every edge,
\[
\left|
p_e^{(\varepsilon)}
-
\mathbf 1_{\{a_e>0\}}
\right|
\le
e^{-m(\theta)/\varepsilon}.
\]
Thus, as \(\varepsilon\) decreases, the soft edge activation approaches the corresponding hard indicator exponentially fast.
Let $c_j=\binom{j+1}{2}$
denote the number of edges contained in a \(j\)-simplex, with \(c_0=0\).
For every \(j\)-simplex,
\[
\left|
w_\sigma^{(\varepsilon)}
-
\chi_\sigma
\right|
\le
c_j e^{-m(\theta)/\varepsilon},\quad\quad
\left\|
W_j^{(\varepsilon)}-\Pi_j
\right\|_2
\le
c_j e^{-m(\theta)/\varepsilon}.
\]
Furthermore, since \(0\le W_j^{(\varepsilon)}\le I\),
\[
\left\|
R_j^{(\varepsilon)}-\Pi_j
\right\|_2
\le
\sqrt{c_j}\,
e^{-m(\theta)/(2\varepsilon)}.
\]
These estimates allow us to directly control the difference between the soft and hard Hodge operators.

\begin{proposition}[Quantitative soft-to-hard operator approximation]
There exists a constant \(C_q>0\) such that
\[
\left\|
\widehat L_q^{(\varepsilon)}
-
L_q^{\mathrm{hard}}
\right\|_2
\le
C_q
e^{-m(\theta)/(2\varepsilon)}
+
\mu c_q
e^{-m(\theta)/\varepsilon}
\]
holds.
Here, \(C_q\) is independent of \(\varepsilon\) and \(\theta\), and depends only on the boundary matrices of the ambient complex and on the degree \(q\).
For example, one may take
\[
C_q
=
2\|B_q\|_2^2
\left(
\sqrt{c_{q-1}}+\sqrt{c_q}
\right)
+
2\|B_{q+1}\|_2^2
\left(
\sqrt{c_q}+\sqrt{c_{q+1}}
\right).
\]
\end{proposition}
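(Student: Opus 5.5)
The plan is to split the difference $\widehat L_q^{(\varepsilon)}-L_q^{\mathrm{hard}}$ into its three natural pieces and bound each by the triangle inequality:
\[
\bigl\|(\widetilde B_q^{(\varepsilon)})^\top\widetilde B_q^{(\varepsilon)}-(B_q^{\mathrm{hard}})^\top B_q^{\mathrm{hard}}\bigr\|_2
+\bigl\|\widetilde B_{q+1}^{(\varepsilon)}(\widetilde B_{q+1}^{(\varepsilon)})^\top-B_{q+1}^{\mathrm{hard}}(B_{q+1}^{\mathrm{hard}})^\top\bigr\|_2
+\mu\bigl\|\Pi_q-W_q^{(\varepsilon)}\bigr\|_2 .
\]
The penalty term is immediate from the entrywise estimate $\|W_q^{(\varepsilon)}-\Pi_q\|_2\le c_q e^{-m(\theta)/\varepsilon}$ established just before the statement; this produces the second summand $\mu c_q e^{-m(\theta)/\varepsilon}$.

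For the boundary pieces, I first bound $\widetilde B_j^{(\varepsilon)}-B_j^{\mathrm{hard}}=R_{j-1}B_jR_j-\Pi_{j-1}B_j\Pi_j$ by writing it telescopically as $(R_{j-1}-\Pi_{j-1})B_jR_j+\Pi_{j-1}B_j(R_j-\Pi_j)$. Since $0\le R_j\le I$ and $\Pi_{j-1}$ is an orthogonal projection, both factors outside the difference have norm at most one. The earlier estimate $\|R_j-\Pi_j\|_2\le\sqrt{c_j}e^{-m/(2\varepsilon)}$ then gives
\[
\|\widetilde B_j^{(\varepsilon)}-B_j^{\mathrm{hard}}\|_2\le\|B_j\|_2(\sqrt{c_{j-1}}+\sqrt{c_j})e^{-m(\theta)/(2\varepsilon)}.
\]
Next I use $X^\top X-Y^\top Y=X^\top(X-Y)+(X-Y)^\top Y$ (and the analogous identity for $XX^\top-YY^\top$), together with $\|\widetilde B_j\|_2\le\|B_j\|_2$ and $\|B_j^{\mathrm{hard}}\|_2\le\|B_j\|_2$, which again follow from the contraction properties of $R$ and $\Pi$. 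This yields
\[
2\|B_j\|_2^2(\sqrt{c_{j-1}}+\sqrt{c_j})e^{-m/(2\varepsilon)}.
\]
Applying this with $j=q$ and $j=q+1$ gives exactly the stated $C_q$, which visibly depends only on $\|B_q\|_2$, $\|B_{q+1}\|_2$ and $q$.

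The only point needing care is the prior estimate $\|R_j-\Pi_j\|_2\le\sqrt{c_j}e^{-m/(2\varepsilon)}$, which the paper asserts but which I would re-verify. Since the matrices are diagonal, it reduces to the scalar inequality $|\sqrt{w}-\chi|\le\sqrt{|w-\chi|}$ for $w\in[0,1]$ and $\chi\in\{0,1\}$. For $\chi=0$ this is an equality. For $\chi=1$ it reads $1-\sqrt w\le\sqrt{1-w}$, which holds because $(1-\sqrt w)^2\le 1-w$ is equivalent to $\sqrt w\le1$ after expanding. Combined with $|w_\sigma-\chi_\sigma|\le c_je^{-m/\varepsilon}$, this gives the claim. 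Everything else is routine bookkeeping with submultiplicativity, so I expect no genuine obstacle beyond keeping track of which factors are contractions.
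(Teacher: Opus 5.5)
Your proposal is correct and follows the paper's proof essentially step for step. You use the same split $\widetilde B_q^{(\varepsilon)}-B_q^{\mathrm{hard}}=(R_{q-1}^{(\varepsilon)}-\Pi_{q-1})B_qR_q^{(\varepsilon)}+\Pi_{q-1}B_q(R_q^{(\varepsilon)}-\Pi_q)$, the same bound $\|A^\top A-C^\top C\|_2\le(\|A\|_2+\|C\|_2)\|A-C\|_2$ for the lower and upper terms, and the same treatment of the $\mu$-penalty; in addition, your scalar check $|\sqrt{w}-\chi|\le\sqrt{|w-\chi|}$ for $\chi\in\{0,1\}$ correctly justifies the estimate $\|R_j^{(\varepsilon)}-\Pi_j\|_2\le\sqrt{c_j}\,e^{-m(\theta)/(2\varepsilon)}$, which the paper only asserts.
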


\begin{proof}
First, decompose the difference between the soft and hard boundary operators as
\[
\begin{aligned}
\widetilde B_q^{(\varepsilon)}
-
B_q^{\mathrm{hard}}
={}&
\left(
R_{q-1}^{(\varepsilon)}-\Pi_{q-1}
\right)
B_qR_q^{(\varepsilon)}
\\
&+
\Pi_{q-1}B_q
\left(
R_q^{(\varepsilon)}-\Pi_q
\right).
\end{aligned}
\]
Since \(0\le R_j^{(\varepsilon)}\le I\) and \(\|\Pi_j\|_2\le1\),
\[
\left\|
\widetilde B_q^{(\varepsilon)}
-
B_q^{\mathrm{hard}}
\right\|_2
\le
\|B_q\|_2
\left(
\|R_{q-1}^{(\varepsilon)}-\Pi_{q-1}\|_2
+
\|R_q^{(\varepsilon)}-\Pi_q\|_2
\right).
\]
Moreover, for arbitrary matrices \(A\) and \(C\),
\[
\|A^\top A-C^\top C\|_2
\le
(\|A\|_2+\|C\|_2)\|A-C\|_2.
\]
Applying this inequality to the lower and upper Hodge terms, together with
\[
\left\|
\mu(I-W_q^{(\varepsilon)})
-
\mu(I-\Pi_q)
\right\|_2
=
\mu
\|W_q^{(\varepsilon)}-\Pi_q\|_2,
\]
gives the stated estimate.
\end{proof}

\begin{corollary}[Hard limit]
The preceding estimate implies
\[
\widehat L_q^{(\varepsilon)}
\longrightarrow
L_q^{\mathrm{hard}}
\qquad
(\varepsilon\to0)
\]
in operator norm.
In particular, in a region with a positive margin \(m(\theta)>0\) from the hard threshold, the error vanishes at least at the rate
\[
O\left(
e^{-m(\theta)/(2\varepsilon)}
\right).
\]
\end{corollary}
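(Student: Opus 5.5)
The corollary follows directly from the quantitative soft-to-hard operator approximation proposition. I will fix a regular parameter value \(\theta\), so that \(m(\theta)>0\), and note that the constant \(C_q\) given there depends only on the ambient boundary matrices \(B_q, B_{q+1}\) and on \(q\). It is therefore independent of \(\varepsilon\). The weight \(\mu>0\) and the count \(c_q\) are also fixed. The proposition then gives, for every \(\varepsilon>0\),
\[
\left\|\widehat L_q^{(\varepsilon)}-L_q^{\mathrm{hard}}\right\|_2
\le
C_q e^{-m(\theta)/(2\varepsilon)}+\mu c_q e^{-m(\theta)/\varepsilon}.
\]

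Next I will merge the two exponential terms into a single rate. Since \(m(\theta)/\varepsilon>0\), we have \(e^{-m(\theta)/\varepsilon}=\bigl(e^{-m(\theta)/(2\varepsilon)}\bigr)^2\le e^{-m(\theta)/(2\varepsilon)}\). Hence the right-hand side is bounded by \((C_q+\mu c_q)\,e^{-m(\theta)/(2\varepsilon)}\). This is exactly the claimed \(O\bigl(e^{-m(\theta)/(2\varepsilon)}\bigr)\) rate, and its constant is uniform in \(\varepsilon\).

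Finally, as \(\varepsilon\to0^+\) the exponent \(-m(\theta)/(2\varepsilon)\) tends to \(-\infty\), so the bound tends to zero. This gives convergence \(\widehat L_q^{(\varepsilon)}\to L_q^{\mathrm{hard}}\) in operator norm.

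The only point needing care is that the hard operator \(L_q^{\mathrm{hard}}=L_q^{\mathrm{hard}}(\theta)\) stays fixed as \(\varepsilon\) varies. This holds because \(\theta\) is held fixed and \(a_e(\theta)\neq0\) for all candidate edges, so the active set and \(\Pi_q\) do not depend on \(\varepsilon\). If one wants uniformity over a region of parameters, the same argument applies on any set where \(m(\theta)\ge m_0>0\) and the sign pattern of \((a_e(\theta))_e\) is constant, giving the uniform rate \(e^{-m_0/(2\varepsilon)}\). No further obstacle is expected.
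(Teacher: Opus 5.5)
Your proposal is correct and matches the paper, which gives no separate proof: the corollary is read off directly from the bound in the quantitative soft-to-hard approximation proposition, at a fixed regular $\theta$ with $m(\theta)>0$. Your absorption of $\mu c_q e^{-m(\theta)/\varepsilon}$ into $(C_q+\mu c_q)\,e^{-m(\theta)/(2\varepsilon)}$, together with your observation that $L_q^{\mathrm{hard}}(\theta)$ does not depend on $\varepsilon$, just makes explicit what the paper leaves implicit.
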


Thus, the soft operator is not merely formally related to the hard operator in the limit. For finite \(\varepsilon\), it can be interpreted as a spectral perturbation of the hard operator whose width is explicitly controlled.
For convenience, define this perturbation width by
\[
\rho_q(\varepsilon,\theta)
=
C_q
e^{-m(\theta)/(2\varepsilon)}
+
\mu c_q
e^{-m(\theta)/\varepsilon}.
\]
Then
\[
\left\|
\widehat L_q^{(\varepsilon)}
-
L_q^{\mathrm{hard}}
\right\|_2
\le
\rho_q(\varepsilon,\theta),
\]
and
$\rho_q(\varepsilon,\theta)
\longrightarrow0
~
(\varepsilon\to0)$.
If, for some edge,
$a_e(\theta)=0$,
then \(m(\theta)=0\), and this local estimate degenerates. This is precisely the point at which the combinatorial structure of the corresponding hard complex changes.
Thus, the preceding result gives a quantitative soft-to-hard stability estimate within each region of parameter space in which the hard combinatorial structure remains fixed.

\begin{remark}[Topology-changing boundaries]
The condition $m(\theta)>0$ is required for the quantitative comparison
with a fixed hard complex. If $a_e(\theta)=0$ for some candidate edge
$e$, the associated hard complex changes combinatorially, and the
perturbation estimate in Proposition~2 no longer yields a small
soft-to-hard error bound with respect to either of the adjacent hard
complexes. Consequently, the spectral localization and Betti-number
approximation results derived from this perturbation bound should not be
interpreted as providing a hard-topology certificate exactly at such a
transition.

On the other hand, for every fixed $\varepsilon>0$, the soft activation
$p_e^{(\varepsilon)}(\theta)$ is smooth in $a_e(\theta)$, and therefore
the soft operator $\widehat{L}_q^{(\varepsilon)}(\theta)$ and the
spectral objectives constructed from it remain continuous across a
topology-changing boundary and are differentiable whenever the
underlying parameterization $a_e(\theta)$ is differentiable.
Thus, the finite-soft formulation can provide an optimization path
across a boundary at which the corresponding hard topology changes,
even though the local perturbation-based hard-topology certificate
temporarily becomes noninformative. Once the iterate enters a region
with $m(\theta)>0$, the preceding soft-to-hard perturbation bounds again
quantify its relation to the corresponding hard complex.
\end{remark}

\subsection{Localization of Hard Zero Modes in the Soft Low-Frequency Spectrum}

The operator-norm estimate above allows us to directly quantify how softening blurs the Hodge spectrum.
Let \(K\) denote the corresponding hard complex and set $b=\beta_q(K)$.
Order the eigenvalues of the hard ambient operator increasingly as
\[
0
=
\lambda_1^{\mathrm{hard}}
=
\cdots
=
\lambda_b^{\mathrm{hard}}
<
\lambda_{b+1}^{\mathrm{hard}}
\le
\cdots.
\]
Let
$\gamma_q
=
\lambda_{b+1}^{\mathrm{hard}}
>0$
denote its smallest positive eigenvalue.
The quantity \(\gamma_q\) is the spectral gap separating the homological zero modes from the remaining spectrum of the hard operator.

\begin{corollary}[Localization of homological zero modes]
Suppose that
\[
\left\|
\widehat L_q^{(\varepsilon)}
-
L_q^{\mathrm{hard}}
\right\|_2
\le
\rho_q.
\]
By Weyl's eigenvalue perturbation inequality,
\[
\left|
\lambda_i(\widehat L_q^{(\varepsilon)})
-
\lambda_i(L_q^{\mathrm{hard}})
\right|
\le
\rho_q
\]
for every \(i\).
Consequently, the soft eigenvalues corresponding to the \(b\) homological zero modes of the hard complex satisfy
\[
0
\le
\lambda_i(\widehat L_q^{(\varepsilon)})
\le
\rho_q,
\qquad
i=1,\ldots,b.
\]
On the other hand, all remaining eigenvalues satisfy
\[
\lambda_i(\widehat L_q^{(\varepsilon)})
\ge
\gamma_q-\rho_q,
\qquad
i>b.
\]
\end{corollary}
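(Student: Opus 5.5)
The plan is to apply Weyl's inequality directly to the two symmetric matrices $\widehat L_q^{(\varepsilon)}$ and $L_q^{\mathrm{hard}}$ on the ambient space $C_q^{\mathrm{amb}}$, and then read off the two bounds from the known spectrum of the hard operator. First I will note that both operators are real symmetric of the same size $\dim C_q^{\mathrm{amb}}$. For the soft operator this holds because $(\widetilde B_q^{(\varepsilon)})^\top\widetilde B_q^{(\varepsilon)}$ and $\widetilde B_{q+1}^{(\varepsilon)}(\widetilde B_{q+1}^{(\varepsilon)})^\top$ are Gram-type and $\mu(I-W_q^{(\varepsilon)})$ is diagonal. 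For the hard operator it follows from its definition. Their eigenvalues can therefore be ordered increasingly and compared index by index. I will write $E=\widehat L_q^{(\varepsilon)}-L_q^{\mathrm{hard}}$. Then $E$ is symmetric with $\|E\|_2\le\rho_q$, so $-\rho_q I\preceq E\preceq \rho_q I$. The Courant--Fischer min--max characterization then gives
\[
\lambda_i(L_q^{\mathrm{hard}})-\rho_q\le \lambda_i(\widehat L_q^{(\varepsilon)})\le \lambda_i(L_q^{\mathrm{hard}})+\rho_q
\]
for every $i$, which is the Weyl inequality in the statement.

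Second, I will establish nonnegativity of the soft spectrum, which gives the lower bound $0$ in the first claim. The two Hodge terms are positive semidefinite as Gram matrices. The penalty term is also positive semidefinite, since $0\le w_\sigma^{(\varepsilon)}\le 1$: each weight is a product of sigmoid values in $(0,1)$, with vertex weights equal to $1$. Hence $\widehat L_q^{(\varepsilon)}\succeq 0$ and all its eigenvalues are $\ge 0$.

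Third, I will use the hard spectrum. By Proposition~1 (consistency of the hard ambient operator), $L_q^{\mathrm{hard}}=L_q(K)\oplus\mu I$ and $\dim\ker L_q^{\mathrm{hard}}=\beta_q(K)=b$. Since $L_q^{\mathrm{hard}}\succeq0$, its first $b$ ordered eigenvalues are exactly $0$, and for $i>b$ we have $\lambda_i^{\mathrm{hard}}\ge\lambda_{b+1}^{\mathrm{hard}}=\gamma_q$ by monotone ordering. Substituting into the Weyl bounds gives $\lambda_i(\widehat L_q^{(\varepsilon)})\le 0+\rho_q$ for $i\le b$, and $\lambda_i(\widehat L_q^{(\varepsilon)})\ge\gamma_q-\rho_q$ for $i>b$.

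No step is a serious obstacle; the argument is a direct perturbation estimate. The only points needing care are these:
\begin{itemize}
\item Weyl's inequality must be applied with eigenvalues sorted consistently, in increasing order for both matrices, and the ambient dimension must be the same for both. This is why the penalty term $\mu(I-\Pi_q)$ and the fixed ambient space matter: otherwise the inactive directions would add spurious zeros and the index $b$ would not line up.
\item The definition of $\gamma_q$ needs $\mu>0$ when $b=\dim C_q^{\mathrm{amb}}$ does not occur. In the degenerate case where all eigenvalues are zero, the second claim is vacuous.
\end{itemize}
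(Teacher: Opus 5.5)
Your proposal is correct and takes the same route as the paper. You apply Weyl's inequality to $\widehat L_q^{(\varepsilon)}$ and $L_q^{\mathrm{hard}}$ on the common ambient space, then use the hard spectrum from Proposition~1: exactly $b$ zero eigenvalues, followed by $\lambda_{b+1}^{\mathrm{hard}}=\gamma_q$. Your explicit check that $\widehat L_q^{(\varepsilon)}\succeq 0$ (Gram terms plus $\mu(I-W_q^{(\varepsilon)})\succeq 0$, since $0\le w_\sigma^{(\varepsilon)}\le 1$) usefully supplies the lower bound $0$, which Weyl alone gives only as $-\rho_q$ and which the paper leaves implicit.
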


In particular, if $\rho_q<\frac{\gamma_q}{2}$,
then the intervals $[0,\rho_q]$
and $[\gamma_q-\rho_q,\infty)$
are disjoint, and \(\widehat L_q^{(\varepsilon)}\) has exactly $\beta_q(K)$
eigenvalues in the interval $[0,\rho_q]$.
Thus, the $\beta_q(K)$ exact zero modes of the hard operator do not disappear under softening. Instead, they spread into
\[
\boxed{
\beta_q(K)\text{ near-zero low-frequency modes}
}
\]
contained in an interval of width \(\rho_q\).
Moreover, a spectral gap of at least $\gamma_q-2\rho_q$
remains between this low-frequency cluster and the rest of the spectrum.
For the common softness parameter \(\varepsilon\),
\[
\rho_q(\varepsilon,\theta)
=
O\left(
e^{-m(\theta)/(2\varepsilon)}
\right),
\]
and therefore the width of the low-frequency cluster shrinks to zero as $\varepsilon\to0$.
Hence, in the operator-norm and eigenvalue-perturbation sense, one obtains the quantitative correspondence
\[
\text{hard zero modes}
\quad\longrightarrow\quad
\text{soft near-zero modes}
\quad\longrightarrow\quad
\text{hard zero modes}.
\]

This result provides the mathematical basis for the low-pass Hodge spectral filters introduced in the next section.
In particular, by choosing a spectral bandwidth that is sufficiently larger than the softening-induced zero-mode spread \(\rho_q\), yet sufficiently smaller than the scale \(\gamma_q-\rho_q\) at which the non-homological modes begin, one can selectively extract the low-frequency modes originating from the homology of the hard complex.

\section{Topological Objectives via Low-Pass Hodge Spectral Filters}

In this section, we construct differentiable topological objectives from the Hodge-Laplacian-type spectral relaxation introduced in the previous section.
We first show that the trace of a low-pass filter provides a quantitative approximation to the Betti number of the corresponding hard complex. We then show that minimizing an appropriate soft spectral loss yields control of the hard Betti number. We also consider objectives that increase or decrease Betti numbers without specifying an exact target. Finally, when a target complex is known, we introduce a matrix-valued spectral-filter matching objective that may retain information beyond the Betti number alone.

All of the objectives introduced below are differentiable through
\(\widehat L_q^{(\varepsilon)}(\theta)\) at regular parameter values. In graph optimization, gradients propagate to edge parameters, whereas in the Vietoris--Rips setting they propagate to point coordinates through pairwise distances. Detailed backward computations are given in the Appendix.

\subsection{Low-Pass Spectral Filters}

Let
$L
=
\widehat L_q^{(\varepsilon)}(\theta)$,
and consider the eigendecomposition
\[
L
=
U\Lambda U^\top,
\qquad
\Lambda
=
\operatorname{diag}
(\lambda_1,\ldots,\lambda_N),
\]
where $N=\dim C_q^{\mathrm{amb}}$.
We call a function $f:[0,\infty)\to[0,1]$
a low-pass spectral filter if it satisfies $f(0)=1$
and is monotonically nonincreasing.
The corresponding matrix function is defined by
\[
f(L)
=
Uf(\Lambda)U^\top.
\]

We mainly use the following two smooth filters.
The heat filter is defined by
\[
f_{\mathrm{heat}}(\lambda)
=
\exp\left(
-\frac{\lambda}{\tau}
\right),
\qquad
\tau>0.
\]
The resolvent filter is defined by
\[
f_{\mathrm{res}}(\lambda)
=
\frac{\alpha}{\lambda+\alpha},
\qquad
\alpha>0.
\]
Both filters take the value one at \(\lambda=0\) and decay as the eigenvalue increases.

The spectral localization result of the previous section suggests that the filter bandwidth should be chosen sufficiently larger than the softening-induced spread \(\rho_q\) of the homological modes, while remaining sufficiently smaller than the scale $\gamma_q-\rho_q$
at which the non-homological modes begin.
In this regime, the filter selectively retains the low-frequency modes originating from the homology of the corresponding hard complex.

\subsection{Low-Frequency Spectral Mass and Hard Betti Numbers}

For a low-pass filter \(f\), define the low-frequency spectral mass by
\[
S_f(\theta)
=
\operatorname{Tr}
f\left(
\widehat L_q^{(\varepsilon)}(\theta)
\right).
\]
The following result relates this soft spectral quantity directly to the Betti number of the corresponding hard complex.

\begin{proposition}[Soft spectral approximation of Betti numbers]
Let \(K\) be the hard complex associated with a soft configuration, and let
$b=\beta_q(K)$.
Suppose that
\[
\left\|
\widehat L_q^{(\varepsilon)}
-
L_q^{\mathrm{hard}}(K)
\right\|_2
\le
\rho_q,
\]
and let $\gamma_q>0$
be the smallest positive eigenvalue of
\(L_q^{\mathrm{hard}}(K)\).
If $0\le\rho_q<\gamma_q$,
then
\[
b f(\rho_q)
\le
S_f
\le
b+(N-b)f(\gamma_q-\rho_q).
\]
Consequently,
\[
\left|
S_f-\beta_q(K)
\right|
\le
E_f,
\]
where
\[
E_f
=
\max
\left\{
b\left[1-f(\rho_q)\right],
\,
(N-b)f(\gamma_q-\rho_q)
\right\}.
\]
\end{proposition}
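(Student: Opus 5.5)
The plan is to reduce everything to the eigenvalue localization already established in the Corollary on localization of homological zero modes, and then use only the two defining properties of a low-pass filter: $0\le f\le 1$ and $f$ nonincreasing on $[0,\infty)$. Write $\lambda_1\le\cdots\le\lambda_N$ for the eigenvalues of $\widehat L_q^{(\varepsilon)}$, so that
\[
S_f=\operatorname{Tr} f(\widehat L_q^{(\varepsilon)})=\sum_{i=1}^N f(\lambda_i).
\]
First I check that every $\lambda_i\ge0$, so that $f(\lambda_i)$ is defined. The operator $\widehat L_q^{(\varepsilon)}$ is a sum of two Gram-type terms and $\mu(I-W_q^{(\varepsilon)})$. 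Since $0\le w_\sigma^{(\varepsilon)}\le1$ and $\mu>0$, the last term is positive semidefinite, so $\widehat L_q^{(\varepsilon)}$ is positive semidefinite.

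Next I apply Weyl's inequality with the hypothesis $\|\widehat L_q^{(\varepsilon)}-L_q^{\mathrm{hard}}(K)\|_2\le\rho_q$. The hard spectrum is $\lambda_1^{\mathrm{hard}}=\cdots=\lambda_b^{\mathrm{hard}}=0$ and $\lambda_i^{\mathrm{hard}}\ge\gamma_q$ for $i>b$; here $b=\dim\ker L_q^{\mathrm{hard}}$ by the consistency proposition. Weyl then gives two facts:
\[
0\le\lambda_i\le\rho_q \quad (i\le b),\qquad \lambda_i\ge\gamma_q-\rho_q \quad (i>b).
\]
The assumption $\rho_q<\gamma_q$ makes $\gamma_q-\rho_q>0$, so $f(\gamma_q-\rho_q)$ is well defined. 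This is exactly the content of the earlier corollary, which I would simply cite.

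For the lower bound, I drop the terms with $i>b$, which is allowed since $f\ge0$. For $i\le b$, monotonicity gives $f(\lambda_i)\ge f(\rho_q)$, so
\[
S_f\ge b\,f(\rho_q).
\]
For the upper bound, I use $f(\lambda_i)\le f(0)=1$ when $i\le b$, and $f(\lambda_i)\le f(\gamma_q-\rho_q)$ when $i>b$, by monotonicity. Summing gives
\[
S_f\le b+(N-b)f(\gamma_q-\rho_q).
\]

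Finally I subtract $b$ from both bounds:
\[
-b[1-f(\rho_q)]\le S_f-b\le (N-b)f(\gamma_q-\rho_q).
\]
Both $b[1-f(\rho_q)]$ and $(N-b)f(\gamma_q-\rho_q)$ are nonnegative, so $|S_f-b|$ is at most their maximum, which is $E_f$.

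There is no real obstacle in this argument. The only points needing care are confirming positive semidefiniteness, so that $f$ is evaluated inside its domain, and the bookkeeping that matches the $b$ smallest soft eigenvalues with the hard zero modes. That matching is automatic because Weyl's inequality compares eigenvalues in sorted order.
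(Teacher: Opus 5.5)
Your proposal is correct and follows essentially the same route as the paper. Both arguments invoke the Weyl-based localization (the $b$ lowest soft eigenvalues lie in $[0,\rho_q]$ and the rest in $[\gamma_q-\rho_q,\infty)$), bound each $f(\lambda_i)$ using monotonicity, $f(0)=1$ and $0\le f\le 1$, and then sum. Your explicit check that $\widehat L_q^{(\varepsilon)}\succeq 0$, which gives $\lambda_i\ge 0$ and keeps $f$ within its domain, is a small improvement, since the paper uses this fact only implicitly.
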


\begin{proof}
By the spectral perturbation result of the previous section,
\[
0
\le
\lambda_i
\left(
\widehat L_q^{(\varepsilon)}
\right)
\le
\rho_q,
\qquad
i=1,\ldots,b,
\]
whereas
\[
\lambda_i
\left(
\widehat L_q^{(\varepsilon)}
\right)
\ge
\gamma_q-\rho_q,
\qquad
i>b.
\]
Since \(f\) is monotonically nonincreasing and \(f(0)=1\),
\[
f(\rho_q)
\le
f(\lambda_i)
\le
1,
\qquad
i\le b,
\]
and
\[
0
\le
f(\lambda_i)
\le
f(\gamma_q-\rho_q),
\qquad
i>b.
\]
Summing these inequalities over the spectrum gives
\[
b f(\rho_q)
\le
S_f
\le
b+(N-b)f(\gamma_q-\rho_q),
\]
from which the error estimate follows.
\end{proof}

Thus, the low-frequency spectral mass is not merely a heuristic surrogate: its deviation from the Betti number of the corresponding hard complex can be explicitly bounded.

\subsubsection{Heat Filter}

For the heat filter,
\[
f_{\mathrm{heat}}(\lambda)
=
e^{-\lambda/\tau},
\]
the preceding proposition gives
\[
\left|
\operatorname{Tr}
\exp\left(
-\frac{\widehat L_q^{(\varepsilon)}}{\tau}
\right)
-
b
\right|
\le
\max
\left\{
b\left(1-e^{-\rho_q/\tau}\right),
\,
(N-b)e^{-(\gamma_q-\rho_q)/\tau}
\right\}.
\]
The first term measures the attenuation of homological modes caused by their displacement away from zero under softening. The second term measures leakage from non-homological positive-eigenvalue modes through the low-pass filter.
Thus, if the filter scale is chosen so that both error terms are small, heuristically corresponding to the spectrally separated regime
\[
\rho_q
\ll
\tau
\ll
\gamma_q-\rho_q,
\]
then
\[
\operatorname{Tr}
\exp\left(
-\frac{\widehat L_q^{(\varepsilon)}}{\tau}
\right)
\approx
\beta_q(K).
\]

\subsubsection{Resolvent Filter}

For the resolvent filter,
\[
f_{\mathrm{res}}(\lambda)
=
\frac{\alpha}{\lambda+\alpha},
\]
we obtain
\[
\left|
\operatorname{Tr}
\left[
\alpha
\left(
\widehat L_q^{(\varepsilon)}
+
\alpha I
\right)^{-1}
\right]
-
b
\right|
\le
\max
\left\{
b\frac{\rho_q}{\rho_q+\alpha},
\,
(N-b)
\frac{\alpha}
{\gamma_q-\rho_q+\alpha}
\right\}.\]
Hence, when both terms on the right-hand side are small, heuristically corresponding to
\[
\rho_q
\ll
\alpha
\ll
\gamma_q-\rho_q,
\]
the resolvent trace also approximates the hard Betti number.

\subsection{Control of Hard Betti Numbers via Soft Spectral Objectives}

Suppose that a target Betti number $b_{\mathrm{tar}}\in\mathbb N$
is prescribed.

We define the trace-type topological loss by
\[
\mathcal L_{\mathrm{trace}}(\theta)
=
\frac12
\left(
S_f(\theta)-b_{\mathrm{tar}}
\right)^2.
\]
The following result shows that sufficiently accurate minimization of this soft loss implies exact satisfaction of the corresponding hard Betti constraint.

\begin{theorem}[Soft-to-hard consistency of Betti-number control]
Suppose that
\[
\left|
S_f-\beta_q(K)
\right|
\le
E_f.
\]
Then
\[
\left|
\beta_q(K)-b_{\mathrm{tar}}
\right|
\le
E_f
+
\sqrt{
2\mathcal L_{\mathrm{trace}}
}.
\]
In particular, if
\[
E_f
+
\sqrt{
2\mathcal L_{\mathrm{trace}}
}
<
\frac12,
\]
then $\beta_q(K)=b_{\mathrm{tar}}$.
\end{theorem}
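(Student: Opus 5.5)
The plan is a direct triangle-inequality argument. We use the integrality of Betti numbers and the target only at the very end.

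First, rewrite the loss in terms of the deviation of the spectral mass from the target. By the definition of $\mathcal L_{\mathrm{trace}}$ we have $2\mathcal L_{\mathrm{trace}}=(S_f-b_{\mathrm{tar}})^2$. Hence
\[
\left|S_f-b_{\mathrm{tar}}\right|=\sqrt{2\mathcal L_{\mathrm{trace}}},
\]
using that the square root of a square is the absolute value. No approximation enters at this step.

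Second, insert $S_f$ between $\beta_q(K)$ and $b_{\mathrm{tar}}$:
\[
\left|\beta_q(K)-b_{\mathrm{tar}}\right|
\le
\left|\beta_q(K)-S_f\right|+\left|S_f-b_{\mathrm{tar}}\right|
\le
E_f+\sqrt{2\mathcal L_{\mathrm{trace}}}.
\]
Here the first term is bounded by the hypothesis $|S_f-\beta_q(K)|\le E_f$. In the intended setting, that hypothesis is supplied by the preceding Proposition on soft spectral approximation of Betti numbers, under its conditions $\|\widehat L_q^{(\varepsilon)}-L_q^{\mathrm{hard}}(K)\|_2\le\rho_q$ and $\rho_q<\gamma_q$. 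The second term was identified in the first step. This gives the stated bound.

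Third, for the exactness claim we use integrality. Both $\beta_q(K)=\dim H_q(K)$ and $b_{\mathrm{tar}}\in\mathbb N$ are integers, so their difference is an integer. If $E_f+\sqrt{2\mathcal L_{\mathrm{trace}}}<\tfrac12$, that integer has absolute value strictly less than $\tfrac12$, and it must therefore be $0$. Consequently $\beta_q(K)=b_{\mathrm{tar}}$.

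There is no genuine obstacle here; the only delicate point is conceptual rather than technical. The theorem is conditional on the bound $|S_f-\beta_q(K)|\le E_f$, which holds only at regular parameters with margin $m(\theta)>0$. It is also only informative when $\rho_q$ is well below $\gamma_q$ and the filter bandwidth sits between them, so that $E_f$ is small. At a topology-changing boundary this hypothesis is unavailable, as noted in the Remark on topology-changing boundaries. The proof should therefore state explicitly that $K$ is the hard complex at a regular $\theta$ for which the hypothesis holds.
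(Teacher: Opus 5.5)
Your proposal is correct and follows the paper's proof: you apply the triangle inequality through $S_f$, use $|S_f-b_{\mathrm{tar}}|=\sqrt{2\mathcal L_{\mathrm{trace}}}$, and conclude from integrality that an integer difference of absolute value below $\tfrac12$ must vanish. Your closing caveat about where the hypothesis $|S_f-\beta_q(K)|\le E_f$ is actually available (regular $\theta$ with $m(\theta)>0$) matches the paper's own remark following the theorem, though the proof itself does not need it.
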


\begin{proof}
By the triangle inequality,
\[
\begin{aligned}
\left|
\beta_q(K)-b_{\mathrm{tar}}
\right|
&\le
\left|
\beta_q(K)-S_f
\right|
+
\left|
S_f-b_{\mathrm{tar}}
\right|
\\
&\le
E_f
+
\sqrt{
2\mathcal L_{\mathrm{trace}}
}.
\end{aligned}
\]
Both
\(\beta_q(K)\) and \(b_{\mathrm{tar}}\) are integers. Therefore, if their difference is strictly smaller than \(1/2\), it must be zero.
\end{proof}
This theorem establishes the basic surrogate-consistency principle
\[
\boxed{
\text{soft spectral optimization}
\quad\Longrightarrow\quad
\text{hard Betti-number control}.
}
\]
Thus, provided that the soft-to-hard spectral error is sufficiently small and the spectral mass is optimized sufficiently close to the desired integer value, the associated hard complex is guaranteed to have the prescribed Betti number.

The implication above is a \emph{certification statement for a
configuration whose soft spectrum is sufficiently well separated from
that of the corresponding hard complex}; it is not a statement that the
hard Betti number must vary monotonically, or even be well defined
continuously, along every optimization trajectory. In particular, when
an iterate approaches a topology-changing boundary
$a_e(\theta)=0$, the margin $m(\theta)$ vanishes and the
perturbation-based bound used to control $E_f$ may cease to be
informative. The soft objective itself remains well defined at finite
$\varepsilon$, but hard-topology certification is recovered only after
the iterate moves into a region in which the hard combinatorial
structure is again separated from the threshold.

\subsection{Increasing and Decreasing Betti Numbers}

In some applications, the goal is not to attain a prescribed Betti number but rather to promote or suppress \(q\)-dimensional homological structure.
In this case, an important consequence of the preceding approximation result is that, under a sufficiently small uniform approximation error, the spectral mass preserves the ordering of distinct Betti levels.

Let \(\Omega\) be a parameter region in which
\[
\left|
S_f(\theta)
-
\beta_q(K(\theta))
\right|
\le
\overline E
<
\frac12
\]
holds uniformly.
Consider $\theta_1,\theta_2\in\Omega$.
If
\[
\beta_q(K(\theta_2))
>
\beta_q(K(\theta_1)),
\]
then, since Betti numbers are integers,
\[
\beta_q(K(\theta_2))
\ge
\beta_q(K(\theta_1))+1.
\]
Therefore,
\[
\begin{aligned}
S_f(\theta_2)-S_f(\theta_1)
&\ge
\beta_q(K(\theta_2))
-
\beta_q(K(\theta_1))
-
2\overline E
\\
&\ge
1-2\overline E
>
0.
\end{aligned}
\]
Hence,
\[
\beta_q(K(\theta_2))
>
\beta_q(K(\theta_1))
\quad\Longrightarrow\quad
S_f(\theta_2)
>
S_f(\theta_1).
\]
Thus, within a uniformly spectrally separated region, \(S_f\) preserves the ordering of distinct Betti-number levels.

Assuming that the relevant extrema are attained, it follows that
\[
\operatorname*{arg\,max}_{\theta\in\Omega}
S_f(\theta)
\subseteq
\operatorname*{arg\,max}_{\theta\in\Omega}
\beta_q(K(\theta)),
\]
and similarly,
\[
\operatorname*{arg\,min}_{\theta\in\Omega}
S_f(\theta)
\subseteq
\operatorname*{arg\,min}_{\theta\in\Omega}
\beta_q(K(\theta)).
\]
Consequently, minimizing $-S_f(\theta)$
promotes larger \(q\)-th Betti numbers, whereas minimizing $S_f(\theta)$
suppresses them.

More quantitatively, if
\[
S_f(\theta_2)-S_f(\theta_1)
>
2\overline E,
\]
then
\[
\beta_q(K(\theta_2))
>
\beta_q(K(\theta_1)).
\]
It should be emphasized that this result does not imply that the Betti number increases at every infinitesimal optimization step. Within a region having fixed hard combinatorial structure, \(S_f\) may vary continuously while the Betti number remains constant.
The result instead states that, under a sufficiently accurate and uniform spectral approximation, the soft spectral mass is order-consistent across distinct hard Betti-number levels.

\subsection{Spectral-Filter Matching for a Known Target Complex}

The objectives considered above use only the scalar quantity
\[
S_f
=
\operatorname{Tr}
f(\widehat L_q^{(\varepsilon)}).
\]
Such objectives are appropriate when the primary goal is to control the amount of \(q\)-dimensional homological structure.
When a target graph, point cloud, or simplicial complex is known, however, one can use the entire filtered operator rather than its trace alone.

Let $L_{\mathrm{tar}}$
denote the target operator and define
\[
F(\theta)
=
f
\left(
\widehat L_q^{(\varepsilon)}(\theta)
\right),
\qquad
F_{\mathrm{tar}}
=
f(L_{\mathrm{tar}}).
\]
We define the spectral-filter matching loss by
\[
\mathcal L_{\mathrm{sf}}(\theta)
=
\frac12
\left\|
F(\theta)-F_{\mathrm{tar}}
\right\|_F^2.
\]
For the heat filter,
\[
\mathcal L_{\mathrm{heat}}
=
\frac12
\left\|
\exp
\left(
-\frac{
\widehat L_q^{(\varepsilon)}
}{\tau}
\right)
-
\exp
\left(
-\frac{
L_{\mathrm{tar}}
}{\tau}
\right)
\right\|_F^2.
\]
For the resolvent filter,
\[
\mathcal L_{\mathrm{res}}
=
\frac12
\left\|
\alpha
\left(
\widehat L_q^{(\varepsilon)}
+
\alpha I
\right)^{-1}
-
\alpha
\left(
L_{\mathrm{tar}}
+
\alpha I
\right)^{-1}
\right\|_F^2.
\]
Matrix-valued spectral matching is stronger than trace matching. Indeed,
\[
\left|
\operatorname{Tr}F(\theta)
-
\operatorname{Tr}F_{\mathrm{tar}}
\right|
\le
\sqrt{N}
\left\|
F(\theta)-F_{\mathrm{tar}}
\right\|_F,
\]
and therefore
\[
\left|
\operatorname{Tr}F(\theta)
-
\operatorname{Tr}F_{\mathrm{tar}}
\right|
\le
\sqrt{
2N\mathcal L_{\mathrm{sf}}
}.
\]

Suppose that
\[
\left|
\operatorname{Tr}F(\theta)
-
\beta_q(K(\theta))
\right|
\le
E_{\mathrm{cur}},
\]
and
\[
\left|
\operatorname{Tr}F_{\mathrm{tar}}
-
\beta_q(K_{\mathrm{tar}})
\right|
\le
E_{\mathrm{tar}}.
\]
Then
\[
\left|
\beta_q(K(\theta))
-
\beta_q(K_{\mathrm{tar}})
\right|
\le
E_{\mathrm{cur}}
+
E_{\mathrm{tar}}
+
\sqrt{
2N\mathcal L_{\mathrm{sf}}
}.
\]
Consequently, if
\[
E_{\mathrm{cur}}
+
E_{\mathrm{tar}}
+
\sqrt{
2N\mathcal L_{\mathrm{sf}}
}
<
\frac12,
\]
then
$\beta_q(K(\theta))
=
\beta_q(K_{\mathrm{tar}})$.
However, the filtered matrix contains more information than its trace.
The trace
$\operatorname{Tr}F$
measures only the total low-frequency spectral mass, whereas
$F=f(L)$
also retains information about the orientation and distribution of the low-frequency eigenspaces in the ambient chain space.
In the sharp-filter and hard-complex limit, \(f(L_q)\) approaches the orthogonal projector onto the harmonic subspace $\ker L_q$.

If \(P\) and \(P_{\mathrm{tar}}\) are orthogonal projectors onto two subspaces of equal dimension, then
\[
\frac12
\left\|
P-P_{\mathrm{tar}}
\right\|_F^2
=
\sum_i
\sin^2\vartheta_i,
\]
where \(\vartheta_i\) are the principal angles between the two subspaces.
Thus, spectral-filter matching can compare not only $\dim\ker L_q
=
\beta_q$,
but also the placement of the low-frequency subspace within the ambient chain space.
At finite filter bandwidth, near-zero positive-eigenvalue modes also contribute. Hence the filtered operator may encode low-frequency geometric and spectral information that is not represented by the Betti number alone.
Accordingly, we distinguish two types of objectives:
$\operatorname{Tr}
f(\widehat L_q^{(\varepsilon)})$
is used when the primary goal is to control the amount of \(q\)-dimensional homological structure, whereas
$f(\widehat L_q^{(\varepsilon)})$
itself is matched to a target filtered operator when the target object is known and its broader low-frequency Hodge spectral structure is to be reproduced.

\subsection{Normalized Polynomial Spectral Moments}

In addition to the heat and resolvent filters, we consider a polynomial low-pass construction designed to control normalized Betti numbers.

One motivation for this formulation comes from recent developments in quantum algorithms for topological data analysis, where normalized Betti numbers are frequently considered instead of unnormalized Betti numbers. For a simplicial complex \(K\), the normalized \(q\)-th Betti number is defined by
\[
\overline\beta_q(K)
=
\frac{
\beta_q(K)
}{
|K^{(q)}|
},
\]
where \(K^{(q)}\) denotes the set of \(q\)-simplices in \(K\).
This quantity measures the fraction of the \(q\)-dimensional chain space occupied by harmonic modes and provides a natural scale-independent topological quantity for comparing complexes of different sizes.

\subsubsection{Penalty-Free Weighted Operator and Fixed Ambient Spectral Scale}

The heat- and resolvent-based objectives considered above use the ambient operator
\[
\widehat L_q^{(\varepsilon)}
=
\left(\widetilde B_q^{(\varepsilon)}\right)^\top
\widetilde B_q^{(\varepsilon)}
+
\widetilde B_{q+1}^{(\varepsilon)}
\left(\widetilde B_{q+1}^{(\varepsilon)}\right)^\top
+
\mu\left(I-W_q^{(\varepsilon)}\right),
\]
where the last term separates inactive simplex directions from the low-frequency spectrum.

For the \(W_q\)-weighted polynomial observable considered in this subsection, however, the inactive-direction penalty is not required. We therefore introduce the penalty-free weighted-current operator
\[
L_{q,\mathrm{wc}}^{(\varepsilon)}
=
\left(\widetilde B_q^{(\varepsilon)}\right)^\top
\widetilde B_q^{(\varepsilon)}
+
\widetilde B_{q+1}^{(\varepsilon)}
\left(\widetilde B_{q+1}^{(\varepsilon)}\right)^\top.
\]
Thus,
\[
\widehat L_q^{(\varepsilon)}
=
L_{q,\mathrm{wc}}^{(\varepsilon)}
+
\mu\left(I-W_q^{(\varepsilon)}\right).
\]
The role of the factor \(W_q^{(\varepsilon)}\) in the observable below is to suppress inactive \(q\)-simplex directions directly, rather than shifting them spectrally by a penalty term. The \(q=1\) specialization of this penalty-free construction is used in the graph experiments of Section~6.

Let $L_q^{\mathrm{amb}}
=
B_q^\top B_q
+
B_{q+1}B_{q+1}^\top$
denote the \(q\)-th Hodge Laplacian of the fixed ambient complex \(K_{\max}\), and define
$\Lambda_q^{\mathrm{amb}}
=
\lambda_{\max}
\left(
L_q^{\mathrm{amb}}
\right)$.
We assume \(\Lambda_q^{\mathrm{amb}}>0\).
Since the ambient complex is fixed throughout optimization,
\(\Lambda_q^{\mathrm{amb}}\) is also fixed.
The penalty-free operator can be written as
\[
L_{q,\mathrm{wc}}^{(\varepsilon)}
=
R_q^{(\varepsilon)}
\left[
B_q^\top W_{q-1}^{(\varepsilon)} B_q
+
B_{q+1}W_{q+1}^{(\varepsilon)}B_{q+1}^\top
\right]
R_q^{(\varepsilon)}.
\]
Since
\[
0\preceq W_p^{(\varepsilon)}\preceq I,
\qquad
0\preceq R_q^{(\varepsilon)}\preceq I,
\]
we have
\[
0
\preceq
B_q^\top W_{q-1}^{(\varepsilon)}B_q
+
B_{q+1}W_{q+1}^{(\varepsilon)}B_{q+1}^\top
\preceq
L_q^{\mathrm{amb}}
\preceq
\Lambda_q^{\mathrm{amb}}I.
\]
Consequently,
\[
0
\preceq
L_{q,\mathrm{wc}}^{(\varepsilon)}
\preceq
\Lambda_q^{\mathrm{amb}}I.
\]

We therefore define the polynomial low-pass filter by
\[
f_{q,d}^{\mathrm{poly}}(\lambda)
=
\left(
1-
\frac{\lambda}
{\Lambda_q^{\mathrm{amb}}}
\right)^d,
\qquad
d\in\mathbb N,
\]
and the corresponding matrix filter by
\[
F_{q,d}^{(\varepsilon)}
=
\left(
I-
\frac{
L_{q,\mathrm{wc}}^{(\varepsilon)}
}{
\Lambda_q^{\mathrm{amb}}
}
\right)^d.
\]
The spectrum of the matrix inside the power lies in \([0,1]\).
At zero eigenvalues,
$f_{q,d}^{\mathrm{poly}}(0)=1$,
whereas the contribution of positive eigenvalues decreases with increasing \(d\).

\subsubsection{Effective Simplex Number and Normalized Spectral Moment}

Define the effective number of \(q\)-simplices in the soft complex by
\[
N_q^{\mathrm{eff}}(\theta)
=
\operatorname{Tr}
W_q^{(\varepsilon)}(\theta)
=
\sum_{\sigma\in K_{\max}^{(q)}}
w_\sigma^{(\varepsilon)}(\theta).
\]
In the hard limit, $(W_q^{(\varepsilon)}
\longrightarrow
\Pi_q)$,
$N_q^{\mathrm{eff}}
\longrightarrow
\operatorname{Tr}\Pi_q
=
|K^{(q)}|$.
Thus, \(N_q^{\mathrm{eff}}\) is a differentiable surrogate for the number of active \(q\)-simplices.

Define the weighted polynomial spectral mass by
\[
S_{q,d}^{W,(\varepsilon)}
=
\operatorname{Tr}
\left[
W_q^{(\varepsilon)}
F_{q,d}^{(\varepsilon)}
\right],
\]
and the normalized polynomial spectral moment by
\[
\overline S_{q,d}^{W,(\varepsilon)}
=
\frac{
\operatorname{Tr}
\left[
W_q^{(\varepsilon)}
\left(
I-
\frac{
L_{q,\mathrm{wc}}^{(\varepsilon)}
}{
\Lambda_q^{\mathrm{amb}}
}
\right)^d
\right]
}{
\operatorname{Tr}
W_q^{(\varepsilon)}
}.
\]
The weighting by \(W_q^{(\varepsilon)}\) is essential in the penalty-free formulation. In the hard limit, the penalty-free operator has zero directions corresponding both to homological modes and to inactive ambient simplices. Multiplication by \(W_q^{(\varepsilon)}\), however, removes the inactive directions from the observable itself
\footnote{
For numerical stability, one may use
\[
\overline S_{q,d,\delta}^{W,(\varepsilon)}
=
\frac{
S_{q,d}^{W,(\varepsilon)}
}{
N_q^{\mathrm{eff}}+\delta
},
\]
with a small constant \(\delta>0\). The theoretical results below are stated for \(\delta=0\).
}.

\begin{proposition}[Hard-limit consistency of the \(W_q\)-weighted penalty-free moment]
Let \(K\) be the corresponding hard complex and define 
$n_q
=
|K^{(q)}|,
~
b
=
\beta_q(K)$.
With respect to the decomposition
\[
C_q^{\mathrm{amb}}
=
C_q(K)
\oplus
C_q(K)^\perp,
\]
the hard limit of the penalty-free operator satisfies
\[
L_{q,\mathrm{wc}}^{\mathrm{hard}}
=
L_q(K)
\oplus
0,
\]
whereas
\[
W_q^{\mathrm{hard}}
=
I_{n_q}
\oplus
0.
\]
Consequently,
\[
\overline S_{q,d}^{W,\mathrm{hard}}
=
\frac{1}{n_q}
\operatorname{Tr}_{C_q(K)}
\left(
I-
\frac{
L_q(K)
}{
\Lambda_q^{\mathrm{amb}}
}
\right)^d.
\]
If \(b<n_q\), let \(\gamma_q>0\) be the smallest positive eigenvalue of \(L_q(K)\). Then
\[
0
\le
\overline S_{q,d}^{W,\mathrm{hard}}
-
\overline\beta_q(K)
\le
\left(
1-
\frac{
\gamma_q
}{
\Lambda_q^{\mathrm{amb}}
}
\right)^d.
\]
If \(b=n_q\), then
$\overline S_{q,d}^{W,\mathrm{hard}}
=
\overline\beta_q(K)
=
1$
for every \(d\).
\end{proposition}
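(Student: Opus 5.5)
We will follow the same route as the proof of Proposition~1 (consistency of the hard ambient operator), now for the penalty-free operator. In the hard limit $W_j^{(\varepsilon)}\to\Pi_j$ and $R_j^{(\varepsilon)}\to\Pi_j$, so $\widetilde B_q^{(\varepsilon)}\to\Pi_{q-1}B_q\Pi_q=B_q^{\mathrm{hard}}$. Hence
\[
L_{q,\mathrm{wc}}^{\mathrm{hard}}
=(B_q^{\mathrm{hard}})^\top B_q^{\mathrm{hard}}+B_{q+1}^{\mathrm{hard}}(B_{q+1}^{\mathrm{hard}})^\top .
\]
This is exactly $L_q^{\mathrm{hard}}$ with the term $\mu(I-\Pi_q)$ removed. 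By Proposition~1, $L_q^{\mathrm{hard}}=L_q(K)\oplus\mu I$ with respect to $C_q(K)\oplus C_q(K)^\perp$. The penalty term acts only on the second summand, and deleting it leaves $L_q(K)\oplus 0$. One can also see this directly: every hard boundary term is sandwiched between projectors $\Pi$, so it annihilates $C_q(K)^\perp$ and maps into $C_q(K)$. The identity $W_q^{\mathrm{hard}}=\Pi_q=I_{n_q}\oplus0$ is immediate.

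Next we compute the moment. Since both operators are block diagonal, $(I-L_{q,\mathrm{wc}}^{\mathrm{hard}}/\Lambda_q^{\mathrm{amb}})^d=(I-L_q(K)/\Lambda_q^{\mathrm{amb}})^d\oplus I$. Multiplying by $W_q^{\mathrm{hard}}$ kills the second block, so the numerator is the trace over $C_q(K)$ and the denominator is $\operatorname{Tr}\Pi_q=n_q$. This gives the displayed formula. The key point is that the inactive directions, which are zero modes of the penalty-free operator and contribute $1$ to the unweighted filter, are removed by the weight $W_q^{\mathrm{hard}}$.

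For the bound, we diagonalize $L_q(K)$ on $C_q(K)$. By the Hodge isomorphism, it has $b$ zero eigenvalues and $n_q-b$ eigenvalues $\lambda_i\ge\gamma_q$. We need $\lambda_i\le\Lambda_q^{\mathrm{amb}}$ so that each factor $1-\lambda_i/\Lambda_q^{\mathrm{amb}}$ lies in $[0,1]$. This follows from the ordering $0\preceq L_{q,\mathrm{wc}}^{(\varepsilon)}\preceq\Lambda_q^{\mathrm{amb}}I$ already established, passed to the limit, or directly from $L_q^{\mathrm{hard}}$-terms $\preceq L_q^{\mathrm{amb}}$ via the projector sandwich. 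Then
\[
\overline S^{W,\mathrm{hard}}_{q,d}=\frac{b}{n_q}+\frac1{n_q}\sum_{i>b}\Bigl(1-\frac{\lambda_i}{\Lambda_q^{\mathrm{amb}}}\Bigr)^d .
\]
Each summand lies in $[0,(1-\gamma_q/\Lambda_q^{\mathrm{amb}})^d]$. Summing, and using $(n_q-b)/n_q\le1$, gives $0\le \overline S^{W,\mathrm{hard}}_{q,d}-\overline\beta_q(K)\le(1-\gamma_q/\Lambda_q^{\mathrm{amb}})^d$. If $b=n_q$, then $L_q(K)=0$, so every term equals $1$ and both quantities equal $1$.

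The main obstacle is checking the block decomposition carefully. We must confirm that $B_{q+1}^{\mathrm{hard}}(B_{q+1}^{\mathrm{hard}})^\top$ restricted to $C_q(K)$ is the ordinary up-Laplacian of $K$. This relies on the fact that active $(q+1)$-simplices have all their faces active, so that $\Pi_qB_{q+1}\Pi_{q+1}$ is the true boundary of $K$. We also need the down term to use only active $(q-1)$-faces, which holds because $K$ is closed under taking faces. We will invoke this closure exactly as in Proposition~1. The eigenvalue bound $\lambda_i\le\Lambda_q^{\mathrm{amb}}$ is the only other point needing care, and it is handled by the Loewner ordering above.
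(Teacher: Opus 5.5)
Your proposal is correct and follows essentially the same route as the paper: identify the hard penalty-free operator as $L_q(K)\oplus 0$ and $W_q^{\mathrm{hard}}=\Pi_q=I_{n_q}\oplus 0$, reduce the weighted trace to a trace over $C_q(K)$ with denominator $n_q$, and then split the spectrum into $b$ zero modes and positive modes bounded below by $\gamma_q$. Your explicit check that every eigenvalue of $L_q(K)$ is at most $\Lambda_q^{\mathrm{amb}}$, which makes each factor $1-\lambda_i/\Lambda_q^{\mathrm{amb}}$ lie in $[0,1]$, supplies a step the paper's proof uses only implicitly.
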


\begin{proof}
In the hard limit,
\[
R_p^{(\varepsilon)}
\longrightarrow
\Pi_p.
\]
The projected boundary operators coincide with the ordinary boundary operators of \(K\) on the active chain spaces and vanish on inactive simplex directions. Hence
\[
L_{q,\mathrm{wc}}^{\mathrm{hard}}
=
L_q(K)\oplus 0.
\]
At the same time,
\[
W_q^{\mathrm{hard}}
=
\Pi_q
=
I_{n_q}\oplus 0.
\]
Therefore,
\[
W_q^{\mathrm{hard}}
\left(
I-
\frac{
L_{q,\mathrm{wc}}^{\mathrm{hard}}
}{
\Lambda_q^{\mathrm{amb}}
}
\right)^d
=
\left(
I-
\frac{
L_q(K)
}{
\Lambda_q^{\mathrm{amb}}
}
\right)^d
\oplus
0,
\]
which proves
\[
\overline S_{q,d}^{W,\mathrm{hard}}
=
\frac{1}{n_q}
\operatorname{Tr}_{C_q(K)}
\left(
I-
\frac{
L_q(K)
}{
\Lambda_q^{\mathrm{amb}}
}
\right)^d.
\]

Now suppose \(b<n_q\), and let the eigenvalues of \(L_q(K)\) be ordered as
\[
0
=
\lambda_1
=
\cdots
=
\lambda_b
<
\lambda_{b+1}
\le
\cdots
\le
\lambda_{n_q}.
\]
Since the zero eigenvalues contribute one to the polynomial filter,
\[
\overline S_{q,d}^{W,\mathrm{hard}}
=
\frac{b}{n_q}
+
\frac{1}{n_q}
\sum_{i>b}
\left(
1-
\frac{
\lambda_i
}{
\Lambda_q^{\mathrm{amb}}
}
\right)^d.
\]
For every positive eigenvalue,
\[
\lambda_i\ge\gamma_q.
\]
Hence,
\[
0
\le
\left(
1-
\frac{
\lambda_i
}{
\Lambda_q^{\mathrm{amb}}
}
\right)^d
\le
\left(
1-
\frac{
\gamma_q
}{
\Lambda_q^{\mathrm{amb}}
}
\right)^d.
\]
It follows that
\[
\begin{aligned}
0
&\le
\overline S_{q,d}^{W,\mathrm{hard}}
-
\frac{b}{n_q}
\\
&\le
\frac{
n_q-b
}{
n_q
}
\left(
1-
\frac{
\gamma_q
}{
\Lambda_q^{\mathrm{amb}}
}
\right)^d
\\
&\le
\left(
1-
\frac{
\gamma_q
}{
\Lambda_q^{\mathrm{amb}}
}
\right)^d.
\end{aligned}
\]
This proves the result.
\end{proof}

\subsubsection{Approximation in the Soft Regime}

We next relate the penalty-free soft operator to its hard-limit counterpart.
At a regular parameter value satisfying
\[
m(\theta)
=
\min_{e\in E_{\max}}
|a_e(\theta)|
>
0,
\]
the same weighted-boundary estimates used for the general ambient operator imply
\[
\left\|
L_{q,\mathrm{wc}}^{(\varepsilon)}
-
L_{q,\mathrm{wc}}^{\mathrm{hard}}
\right\|_2
\le
C_q^{\mathrm{wc}}
e^{-m(\theta)/(2\varepsilon)}
\]
for some constant \(C_q^{\mathrm{wc}}>0\), together with
\[
\left\|
W_q^{(\varepsilon)}
-
\Pi_q
\right\|_2
=
O\left(
e^{-m(\theta)/\varepsilon}
\right).
\]

For fixed \(d\), define
\[
M_q^{(\varepsilon)}
=
I-
\frac{
L_{q,\mathrm{wc}}^{(\varepsilon)}
}{
\Lambda_q^{\mathrm{amb}}
},
\qquad
M_q^{\mathrm{hard}}
=
I-
\frac{
L_{q,\mathrm{wc}}^{\mathrm{hard}}
}{
\Lambda_q^{\mathrm{amb}}
}.
\]
Both matrices have operator norm at most one.
Using the telescoping identity
\[
A^d-B^d
=
\sum_{k=0}^{d-1}
A^{d-1-k}(A-B)B^k,
\]
we obtain
\[
\left\|
\left(M_q^{(\varepsilon)}\right)^d
-
\left(M_q^{\mathrm{hard}}\right)^d
\right\|_2
\le
\frac{
d C_q^{\mathrm{wc}}
}{
\Lambda_q^{\mathrm{amb}}
}
e^{-m(\theta)/(2\varepsilon)}.
\]

If the corresponding hard complex contains at least one \(q\)-simplex, then
\[
\operatorname{Tr}
W_q^{(\varepsilon)}
\longrightarrow
n_q>0,
\]
and hence the denominator remains bounded away from zero for sufficiently small \(\varepsilon\).
Combining the preceding estimates, there exists, for every fixed \(d\), a constant
\(C_{q,d}>0\) such that
\[
\left|
\overline S_{q,d}^{W,(\varepsilon)}
-
\overline S_{q,d}^{W,\mathrm{hard}}
\right|
\le
C_{q,d}
e^{-m(\theta)/(2\varepsilon)}.
\]
Therefore,
\[
\left|
\overline S_{q,d}^{W,(\varepsilon)}
-
\overline\beta_q(K)
\right|
\le
C_{q,d}
e^{-m(\theta)/(2\varepsilon)}
+
\left(
1-
\frac{
\gamma_q
}{
\Lambda_q^{\mathrm{amb}}
}
\right)^d,
\]
with the second term omitted when \(b=n_q\).

The first term represents the soft-to-hard approximation error, whereas the second represents the finite-degree polynomial filtering error.
In particular, the rigorous iterated limit is
\[
\lim_{d\to\infty}
\lim_{\varepsilon\to0}
\overline S_{q,d}^{W,(\varepsilon)}
=
\overline\beta_q(K),
\]
provided that the corresponding hard combinatorial structure remains fixed.
More generally, convergence also holds along any sequence
\[
\varepsilon_k\to0,
\qquad
d_k\to\infty,
\]
for which
\[
C_{q,d_k}
e^{-m(\theta)/(2\varepsilon_k)}
\longrightarrow
0.
\]
Thus, the \(W_q\)-weighted normalized polynomial spectral moment provides a differentiable surrogate for the normalized Betti number of the associated hard complex.

\subsubsection{Control of a Target Normalized Betti Number}

At finite softness and finite polynomial degree, the differentiable spectral target and the desired hard normalized Betti target need not coincide exactly.
We therefore distinguish a prescribed soft spectral target
$\overline s_q^\star$
from a desired hard normalized Betti target
$\overline\beta_q^{\mathrm{tar}}$.
We define
\[
\mathcal L_{\mathrm{norm}}
=
\frac12
\left(
\overline S_{q,d}^{W,(\varepsilon)}
-
\overline s_q^\star
\right)^2.
\]

By the triangle inequality,
\[
\begin{aligned}
\left|
\overline\beta_q(K)
-
\overline\beta_q^{\mathrm{tar}}
\right|
\le{}&
\left|
\overline\beta_q(K)
-
\overline S_{q,d}^{W,(\varepsilon)}
\right|
+
\left|
\overline S_{q,d}^{W,(\varepsilon)}
-
\overline s_q^\star
\right|
+
\left|
\overline s_q^\star
-
\overline\beta_q^{\mathrm{tar}}
\right|\\
\le{}&
C_{q,d}
e^{-m(\theta)/(2\varepsilon)}
+
\left(
1-
\frac{
\gamma_q
}{
\Lambda_q^{\mathrm{amb}}
}
\right)^d
+
\sqrt{
2\mathcal L_{\mathrm{norm}}
}
+
\left|
\overline s_q^\star
-
\overline\beta_q^{\mathrm{tar}}
\right|.
\end{aligned}
\]
Thus, hard normalized-Betti control depends on three effects:
the soft-to-hard approximation error, the finite-degree polynomial filtering error, and the finite-regime mismatch between the chosen soft spectral target and the desired hard topological target.

In the special case
$\overline s_q^\star
=
\overline\beta_q^{\mathrm{tar}}$,
the preceding estimate reduces to
\[
\left|
\overline\beta_q(K)
-
\overline\beta_q^{\mathrm{tar}}
\right|
\le
C_{q,d}
e^{-m(\theta)/(2\varepsilon)}
+
\left(
1-
\frac{
\gamma_q
}{
\Lambda_q^{\mathrm{amb}}
}
\right)^d
+
\sqrt{
2\mathcal L_{\mathrm{norm}}
}.
\]

If the number \(n_q\) of active \(q\)-simplices is fixed and
$\overline\beta_q^{\mathrm{tar}}
=
\frac{
b_{\mathrm{tar}}
}{
n_q
}$,
then the stronger condition
\[
C_{q,d}
e^{-m(\theta)/(2\varepsilon)}
+
\left(
1-
\frac{
\gamma_q
}{
\Lambda_q^{\mathrm{amb}}
}
\right)^d
+
\sqrt{
2\mathcal L_{\mathrm{norm}}
}
+
\left|
\overline s_q^\star
-
\overline\beta_q^{\mathrm{tar}}
\right|
<
\frac{1}{2n_q}
\]
implies
$\beta_q(K)
=
b_{\mathrm{tar}}$.
When \(n_q\) itself varies during optimization, the same estimate still provides quantitative control of $\overline\beta_q(K)$.
Accordingly, the asymptotic theory establishes consistency between the weighted soft spectral moment and the hard normalized Betti number, while at finite softness and finite polynomial degree the soft spectral target may be calibrated separately for the desired hard-topology regime.

\section{Numerical Experiments: Vietoris--Rips Topology Control}
\label{sec:vr_experiments}

In this section, we apply the Hodge spectral surrogate developed in the preceding sections to Vietoris--Rips complexes of point clouds and numerically investigate its properties as a gradient-based signal for topology-constrained optimization.
Persistent homology and Hodge spectral filtering exploit different types of information; therefore, the comparison here concerns their behavior as optimization signals rather than their relative merits as topological descriptors.

The experiments show that increasing the low-frequency Hodge spectral mass can be accompanied by an increase in the hard ($H_1$) structure, while the proposed objectives yield more spatially distributed gradients and smaller scale-normalized derivative variation near persistence-pairing changes in the tested settings. Matrix-valued spectral-filter matching also produces update directions that reflect prescribed geometric deformations, although the current dense implementation incurs substantially higher computational cost.

\subsection{Application to Vietoris--Rips Complexes}

Let $X=\{x_1,\ldots,x_n\}\subset\mathbb R^d$
be a point cloud.
At scale \(r>0\), the Vietoris--Rips complex contains an edge \((i,j)\) whenever
$\|x_i-x_j\|\le r$.
We continuously relax this hard condition by defining
\[
p_{ij}^{(r,\varepsilon)}(X)
=
\sigma\left(
\frac{r-\|x_i-x_j\|}{\varepsilon}
\right).
\]
For a simplex \(\sigma\), define
$w_\sigma^{(r,\varepsilon)}(X)
=
\prod_{e\subset\sigma}
p_e^{(r,\varepsilon)}(X)$.
Since the Vietoris--Rips condition requires all edges of a simplex to have length at most \(r\),
\[
w_\sigma^{(r,\varepsilon)}(X)
\longrightarrow
\mathbf 1_{\{\sigma\in\mathrm{VR}_r(X)\}}
\qquad
(\varepsilon\to0).
\]
Thus, at every scale \(r\), we obtain a differentiable Hodge spectral operator
\[
X
\longmapsto
\widehat L_q^{(r,\varepsilon)}(X).
\]
By the theory developed in the preceding sections, in a regular region where no pairwise distance is exactly equal to the threshold \(r\),
\(\widehat L_q^{(r,\varepsilon)}(X)\) is close to the corresponding hard operator, and its low-frequency spectral mass
\[
S_q^{(r)}(X)
=
\operatorname{Tr}
f\left(
\widehat L_q^{(r,\varepsilon)}(X)
\right)
\]
provides a differentiable surrogate for
$\beta_q\left(\mathrm{VR}_r(X)\right)$.
We use this correspondence as the basis of the numerical experiments below.

\subsection{Multi-Scale Hodge Spectral Objectives}

We consider multiple filtration scales
$r_1<r_2<\cdots<r_M$.
For each scale, define
\[
S_q^{(r_m)}(X)
=
\operatorname{Tr}
f\left(
\widehat L_q^{(r_m,\varepsilon)}(X)
\right).
\]
For scale weights \(\omega_m\ge0\), define the multi-scale spectral mass by
\[
S_q^I(X)
=
\sum_{m=1}^{M}
\omega_m
S_q^{(r_m)}(X),
\]
and the corresponding hard Betti profile by
$B_q^I(X)
=
\sum_{m=1}^{M}
\omega_m
\beta_q\left(
\mathrm{VR}_{r_m}(X)
\right)$.
The preceding theory implies that \(S_q^I(X)\) can be interpreted as a smooth surrogate for \(B_q^I(X)\).
Accordingly, to promote \(q\)-dimensional homological structure, we minimize
$\mathcal L_{\mathrm{promote}}
=
-S_q^I(X)$,
whereas to suppress it, we minimize
$\mathcal L_{\mathrm{suppress}}
=
S_q^I(X)$.
If a target Betti profile \(b_q^{\mathrm{tar}}(r_m)\) is specified, we use
\[
\mathcal L_{\mathrm{trace}}
=
\frac12
\sum_m
\omega_m
\left[
S_q^{(r_m)}(X)
-
b_q^{\mathrm{tar}}(r_m)
\right]^2.
\]
These are not persistence-diagram losses: rather than directly optimizing birth--death pairs, the proposed method optimizes the low-frequency Hodge spectrum at each scale.

When the target point cloud \(X_{\mathrm{tar}}\) itself is known, we compare the full filtered operators rather than only their traces.
We define
\[
\mathcal L_{\mathrm{sf}}
=
\frac12
\sum_m
\omega_m
\left\|
f(L_m(X))
-
f(L_m(X_{\mathrm{tar}}))
\right\|_F^2,
\]
where $L_m(X)
=
\widehat L_q^{(r_m,\varepsilon)}(X)$.
This matrix-valued spectral matching may exploit low-frequency geometric and spectral information that is not represented by the Betti number alone.

\subsection{Experimental Setup}

We primarily consider \(q=1\) and optimize point clouds
$X\subset\mathbb R^2$.
For the Hodge-based methods, we construct
\(\widehat L_1^{(r,\varepsilon)}(X)\)
and apply either a heat or resolvent low-pass filter.
For the persistent-homology baseline, we compute the \(H_1\) barcode and construct objectives from persistence values \(d_\gamma-b_\gamma\). The first four experiments are repeated over 15 random seeds, while the runtime experiment is averaged over five random seeds.

When necessary, mild geometric regularization terms are added to prevent excessive collapse or spreading of the point cloud.
These terms are used only to keep the geometry numerically well behaved; the topological optimization signal itself is supplied by either the persistent-homology or Hodge spectral objective.

\subsection{Multi-Scale \(H_1\) Induction}
\label{sec:exp_h1_induction}

We first examine whether increasing the low-frequency Hodge spectral mass is accompanied by an increase in the actual \(H_1\) structure of the corresponding hard Vietoris--Rips complexes.
For each of 15 random seeds, we initialize \(36\) points randomly in $[-1,1]^2$
and use the scale set
$I
=
\{0.42,0.46,0.50,0.54,0.58,0.62\}$.
We compare a persistent-homology baseline, a Hodge heat objective, and a Hodge resolvent objective, all using the same optimization budget of \(10\) epochs.
For the Hodge objectives, we maximize
\[
S_1^I(X)
=
\sum_{r\in I}
\operatorname{Tr}
f\left(
\widehat L_1^{(r,\varepsilon)}(X)
\right).
\]
Figure~\ref{fig:vr_exp1} shows the initial point cloud and the optimized configurations.

\begin{figure}[t]
    \centering
    \includegraphics[width=\textwidth]{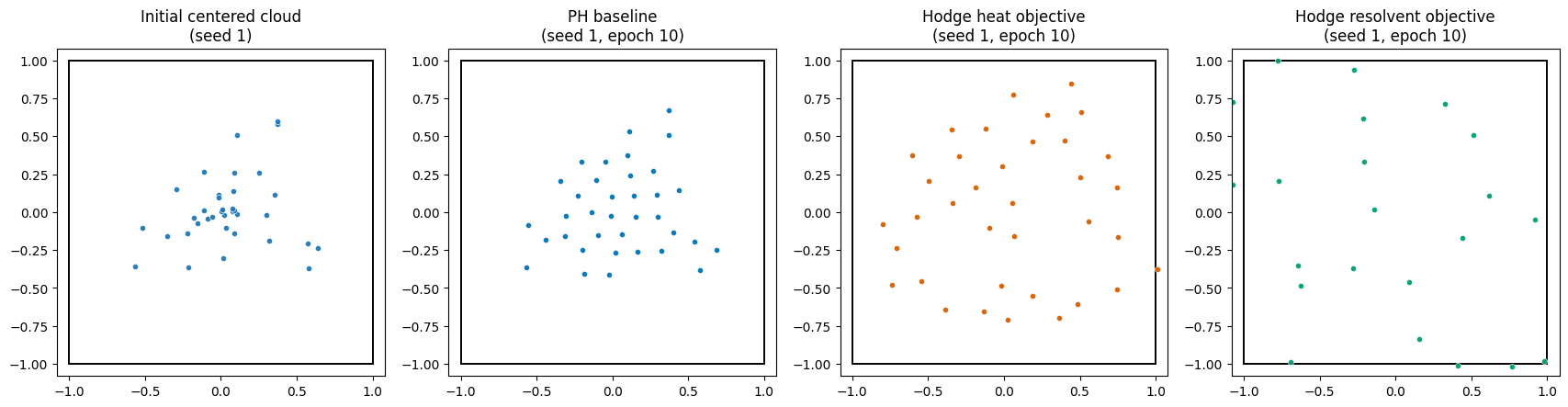}
    \caption{
    Multi-scale \(H_1\) induction from a random point cloud. The panels show one seeded run (seed 1) from the 15 random seeds, and all methods use the same optimization budget of \(10\) epochs. The Hodge heat and resolvent objectives produce more pronounced loop-like structures in this example.
    }
    \label{fig:vr_exp1}
\end{figure}

After optimization, we evaluate the exact hard diagnostic
$B_1^I(X)
=
\sum_{r\in I}
\beta_1\left(
\mathrm{VR}_r(X)
\right)$,
which is not used in the optimization.
The resulting mean \(\pm\) standard deviation values over 15 seeds are
$B_1^I
=
0.400\pm0.712,\;0.400\pm0.879,\;10.067\pm3.473,\;23.600\pm5.352$
for the initial configuration, persistent-homology baseline, Hodge heat objective, and Hodge resolvent objective, respectively.
Thus, across the seeded runs, increasing the soft low-frequency \(H_1\) spectral mass is accompanied by an increase in the actual \(H_1\) structure of the hard Vietoris--Rips complexes.
This does not imply that a larger Betti-profile sum is universally preferable; rather, Figure~\ref{fig:vr_exp1} and the exact Betti diagnostic provide empirical support for the soft-spectral/hard-topological correspondence predicted by the theory.

\subsection{Gradient Localization}
\label{sec:exp_gradient_localization}

We next compare the spatial distribution of the optimization signals.
Figure~\ref{fig:vr_exp2_direction} visualizes the gradient directions generated by the persistent-homology loss and the Hodge heat and resolvent objectives.

\begin{figure}[t]
    \centering
    \includegraphics[width=\textwidth]{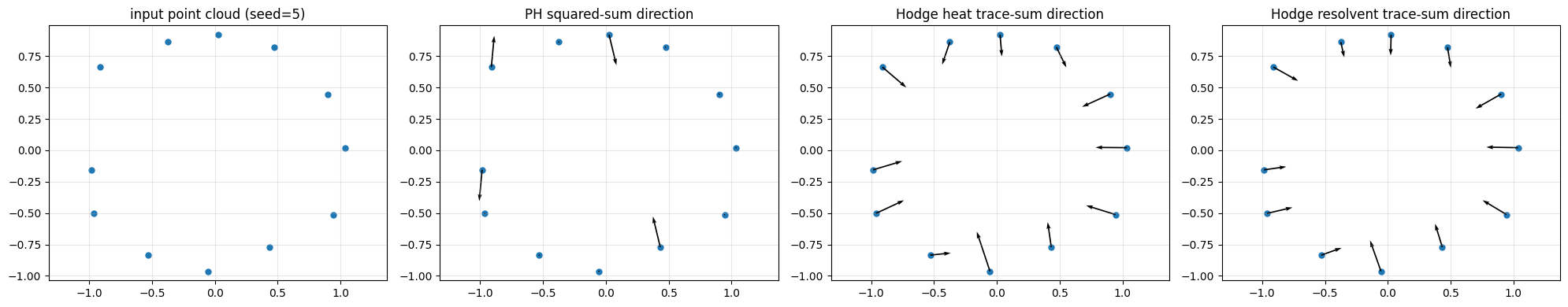}
    \caption{
    Gradient directions produced by the persistent-homology, Hodge heat, and Hodge resolvent objectives. The panels show one seeded run (seed 5) from the 15 random seeds; for this input, the persistent-homology gradient is concentrated on a relatively small subset of points, whereas the Hodge spectral gradients are distributed more broadly over the point cloud.
    }
    \label{fig:vr_exp2_direction}
\end{figure}

For each point, let
$g_i
=
\left\|
\nabla_{x_i}\mathcal L
\right\|$,
and define the normalized gradient-mass distribution
$\pi_i
=
\frac{g_i}{\sum_j g_j}$.
We characterize its spatial spread using the entropy
$H(\pi)
=
-\sum_i
\pi_i\log\pi_i$
and the fraction of total gradient mass carried by the top \(10\%\) of points with the largest gradient norms.
The measured mean \(\pm\) standard deviation values over 15 seeds are
\[
\begin{array}{c|cc}
&
\text{Entropy}
&
\text{Top-10\% mass}
\\
\hline
\mathrm{PH}
&
1.3247\pm0.1231
&
0.5429\pm0.0858
\\
\mathrm{Hodge\ heat}
&
2.4548\pm0.0181
&
0.2211\pm0.0170
\\
\mathrm{Hodge\ resolvent}
&
2.4665\pm0.0137
&
0.2129\pm0.0191
\end{array}
\]
and are summarized in Figure~\ref{fig:vr_exp2_entropy}.

\begin{figure}[t]
    \centering
    \includegraphics[width=0.72\textwidth]{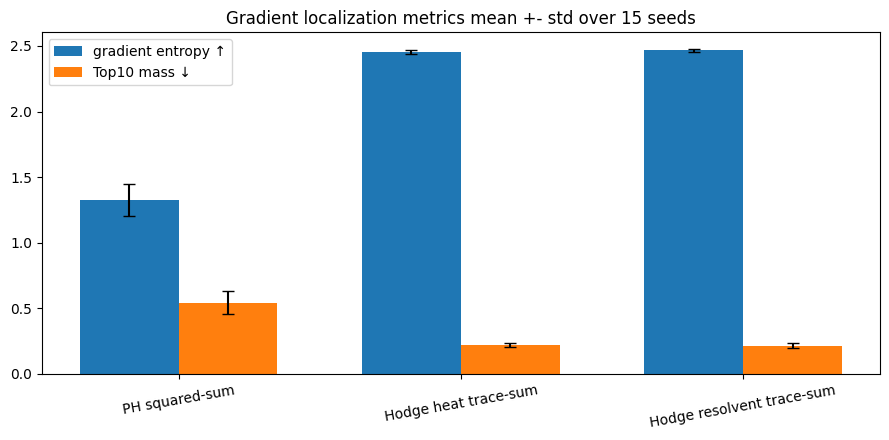}
    \caption{
    Quantitative comparison of gradient localization over 15 random seeds.
    Higher entropy and lower top-\(10\%\) mass indicate a more spatially distributed gradient profile.
    Both Hodge spectral objectives produce substantially more distributed gradients than the persistent-homology loss across the tested seeds.
    }
    \label{fig:vr_exp2_entropy}
\end{figure}

Across the 15 seeded point clouds, the Hodge spectral objectives exhibit higher entropy and lower top-\(10\%\) mass than the persistent-homology loss, indicating a more spatially distributed gradient profile.
This is consistent with the fact that persistence-based losses propagate gradients through critical simplices associated with birth and death events, whereas Hodge spectral losses depend on the low-frequency structure of the ambient matrix operator.
This experiment concerns gradient localization and should not be interpreted as a general performance comparison between the two approaches.

\subsection{Pairing-Switch Stress Test}
\label{sec:exp_pair_switch}

We next examine the behavior of the objectives near a change in persistence pairing. For each of 15 random seeds, we construct a point cloud containing two similar loop-like structures and vary their relative sizes using a deformation parameter \(s\). Configurations from one seeded run are shown in Figure~\ref{fig:vr_exp3_data}.

\begin{figure}[t]
    \centering
    \includegraphics[width=\textwidth]{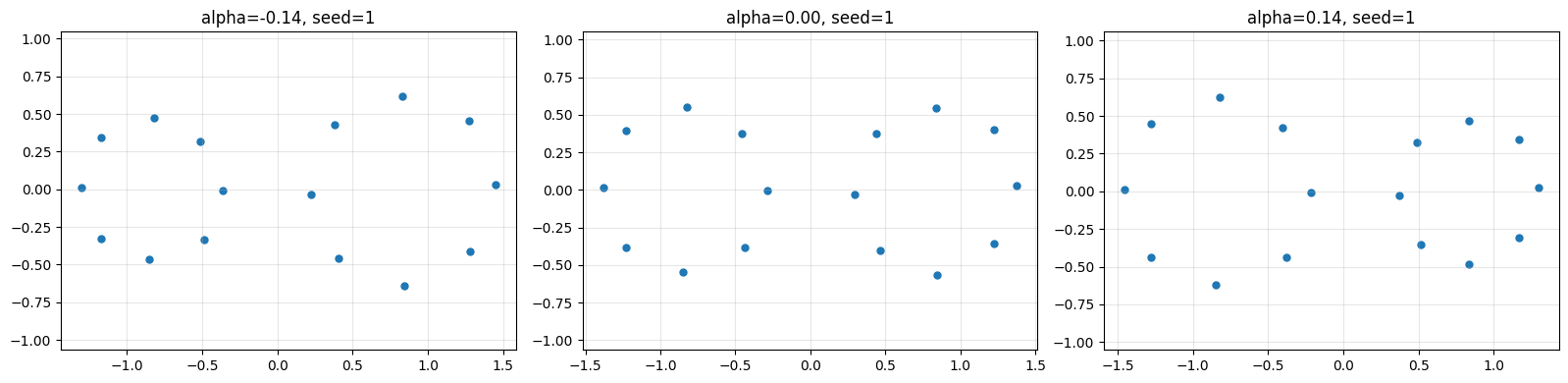}
    \caption{
    Configurations from one seeded run (seed 1) in the pairing-switch stress test for \(s=-0.14\), \(s=0\), and \(s=0.14\). The parameter \(s\) changes the relative sizes of two competing loop-like structures.
    }
    \label{fig:vr_exp3_data}
\end{figure}

For the persistent-homology objective, we use the largest \(H_1\) persistence value.
Near the transition, the ordering of the two dominant persistence bars can change, causing the selected birth--death pair to switch.
The Hodge objective does not explicitly select a particular persistence pair, but instead evaluates low-frequency spectral structure over a scale interval.
Figure~\ref{fig:vr_exp3_loss} shows the objective values and their derivatives with respect to \(s\).

\begin{figure}[t]
    \centering
    \includegraphics[width=\textwidth]{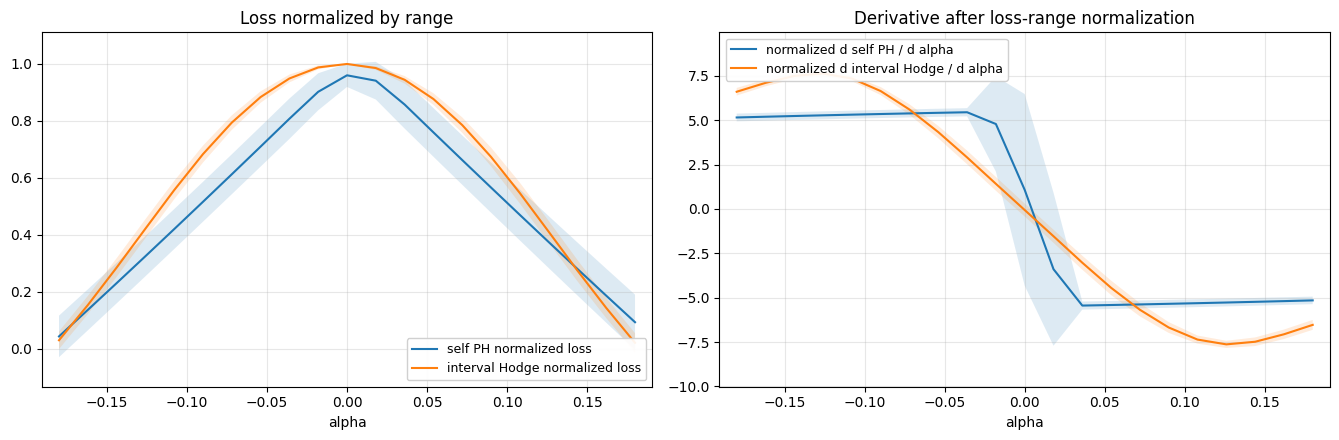}
    \caption{
    Loss values and derivatives in the pairing-switch stress test over 15 random seeds.
    The derivative variation is evaluated both on the raw scale and after normalization by the dynamic range of each loss.
    }
    \label{fig:vr_exp3_loss}
\end{figure}

Because the dynamic ranges of the two losses differ substantially, we normalize each derivative variation by the range of the corresponding objective.
The maximum normalized derivative jumps are then \(10.9800\pm0.4616\) for persistent homology and \(1.5405\pm0.0682\) for the Hodge spectral objective, reported as mean \(\pm\) standard deviation over 15 seeds.
Thus, relative to its own loss scale, the Hodge objective exhibits substantially smaller derivative variation near the pairing switch.

\subsection{Target Geometric Deformation}
\label{sec:exp_target_alignment}

We next investigate whether matrix-valued spectral-filter matching provides optimization information beyond the amount of homology alone.

The initial point cloud is a circle and the target is a wavy ring. The experiment is repeated over 15 random seeds.
Each point is parameterized as
$x_i(r_i)
=
\left(
r_i\cos\theta_i,
r_i\sin\theta_i
\right)$,
where the angles \(\theta_i\) are fixed and the radii \(r_i\) are optimized.
For the Hodge objectives, we minimize
\[
\mathcal L_{\mathrm{sf}}
=
\frac12
\left\|
f(\widehat L_1(X))
-
f(\widehat L_1(X_{\mathrm{tar}}))
\right\|_F^2.
\]
Figure~\ref{fig:vr_exp4_profile} compares the reconstructed radial profiles with the target.

\begin{figure}[t]
    \centering
    \includegraphics[width=\textwidth]{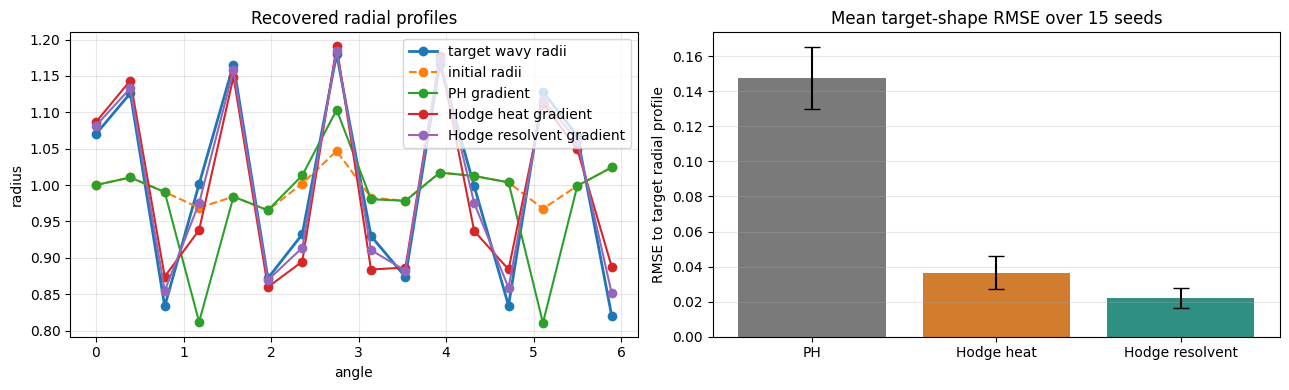}
    \caption{
    Recovered radial profiles in the target-shape experiment. The left panel shows one seeded run (seed 7), while the right panel reports mean \(\pm\) standard deviation RMSE over 15 random seeds; the Hodge heat and resolvent objectives reproduce the target radial modulation more accurately than the persistent-homology baseline in this controlled setting.
    }
    \label{fig:vr_exp4_profile}
\end{figure}

The final RMSE values are
$0.1477\pm0.0177$
for persistent homology,
$0.0366\pm0.0095$
for the Hodge heat filter, and
$0.0221\pm0.0055$
for the Hodge resolvent filter.
The corresponding optimized point clouds are shown in Figure~\ref{fig:vr_exp4_data}.

\begin{figure}[t]
    \centering
    \includegraphics[width=\textwidth]{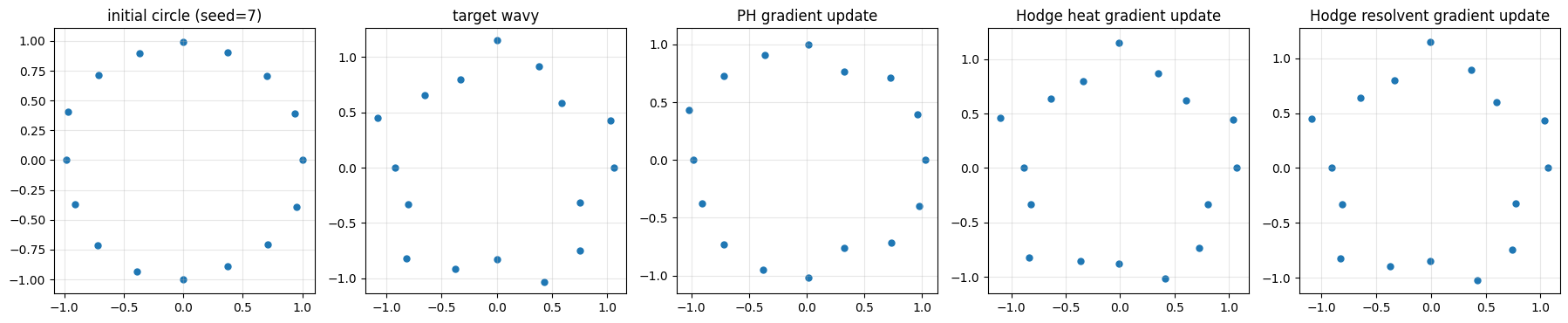}
    \caption{
    Shape synthesis using analytic gradients. Shown are the initial circle from one seeded run (seed 7), the target wavy ring, and the point clouds obtained using the persistent-homology, Hodge heat, and Hodge resolvent objectives.
    }
    \label{fig:vr_exp4_data}
\end{figure}

To examine the local gradient information directly, we also compute the cosine similarity between the first update direction and the prescribed deformation direction toward the target.
The mean \(\pm\) standard deviation cosine similarities over 15 seeds are
\[
-0.1743\pm0.2820,\qquad
0.2818\pm0.1580,\qquad
0.7843\pm0.0734
\]
for persistent homology, the Hodge heat filter, and the Hodge resolvent filter, respectively.
In particular, the resolvent objective produces an initial gradient direction strongly aligned with the target deformation.
Together, Figures~\ref{fig:vr_exp4_profile} and~\ref{fig:vr_exp4_data} support the interpretation that matching the full filtered operator \(f(L)\) can provide low-frequency geometric information that is not represented by the Betti number alone.
This experiment is deliberately controlled and does not establish a general advantage for arbitrary shape-reconstruction tasks.

\subsection{Computational Cost}
\label{sec:exp_runtime}

Finally, we examine the computational cost of the current dense implementation.
We vary the number of points over
$n
=
12,20,24,28,32,36,40,48,56,64,70,80,90,100$
and measure the time required for one loss-gradient evaluation, averaging over five random seeds at each point-cloud size.
Figure~\ref{fig:vr_exp5_runtime} shows the runtime as a function of point-cloud size.

\begin{figure}[t]
    \centering
    \includegraphics[width=\textwidth]{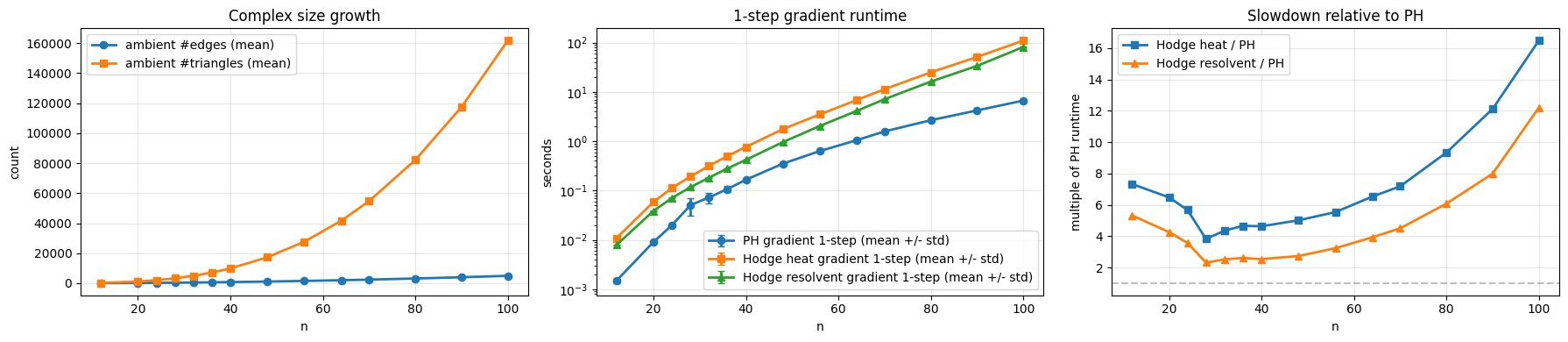}
    \caption{
    Runtime of one loss-gradient evaluation as a function of the number of points, reported as mean \(\pm\) standard deviation over five random seeds.
    The current Hodge implementation uses dense ambient Hodge matrices and is therefore substantially more expensive than the persistent-homology baseline at larger problem sizes.
    }
    \label{fig:vr_exp5_runtime}
\end{figure}

At \(n=100\), the measured runtimes are
$6.6771\pm0.1530\ {\rm s}$
for persistent homology,
$110.1246\pm0.3557\ {\rm s}$
for Hodge heat, and
$81.5280\pm0.6970\ {\rm s}$
for Hodge resolvent.
Thus, relative to the persistent-homology baseline, the current implementation is approximately \(16.49\times\) slower for the heat objective and \(12.21\times\) slower for the resolvent objective.
These results reflect the cost of the present dense proof-of-concept implementation rather than the intrinsic computational complexity of the framework.
Larger-scale applications will require more efficient implementations based on sparse linear algebra, polynomial filtering, Chebyshev approximation, iterative linear solvers, or stochastic trace estimation.

\section{Normalized First-Betti-Type Control on Graph Clique Complexes via Polynomial Moments}
\label{sec:graph_experiments}

In this section, we examine whether the \(W_q\)-weighted normalized polynomial spectral moment introduced in the theoretical section can serve as a differentiable objective for controlling first-homology-type structure in graph clique complexes.
The experiments are intended as controlled proof-of-concept studies on small graphs rather than as large-scale graph-generation benchmarks.

\subsection{Numerical Setup}

For each candidate edge \(e=(i,j)\), we introduce an edge logit \(a_e\) and define the soft edge probability
\[
p_e
=
\sigma(a_e)
=
\frac{1}{1+\exp(-a_e)}.
\]
For a triangle \(\sigma\), the corresponding soft simplex weight is
$w_\sigma
=
\prod_{e\subset\sigma}p_e$.
We define
$W_1
=
\operatorname{diag}(p_e),
~
W_2
=
\operatorname{diag}(w_\sigma)$.
Theoretically,
$R_1=W_1^{1/2},
~
R_2=W_2^{1/2}$,
whereas in the numerical implementation we introduce the stabilization parameter
$\epsilon_w=10^{-8}$
and use
\[
R_1
=
\operatorname{diag}
\left(
\sqrt{p_e+\epsilon_w}
\right),
\qquad
R_2
=
\operatorname{diag}
\left(
\sqrt{w_\sigma+\epsilon_w}
\right).
\]

The weighted boundary operators are
\[
\widetilde B_1
=
B_1R_1,
\qquad
\widetilde B_2
=
R_1B_2R_2,
\]
and the soft first Hodge-Laplacian-type operator is
\[
L_1^{\mathrm{soft}}
=
\widetilde B_1^\top\widetilde B_1
+
\widetilde B_2\widetilde B_2^\top.
\]

No inactive-direction penalty is used in the present implementation.
Instead, inactive directions in the fixed ambient edge space are suppressed directly by the \(W_1\)-weighted trace.
This corresponds to the penalty-free normalized polynomial construction developed in the theoretical section.

For normalization of the polynomial filter, we use the fixed ambient Hodge Laplacian
\[
L_1^{\mathrm{amb}}
=
B_1^\top B_1
+
B_2B_2^\top
\]
and define
\[
\Lambda_1^{\mathrm{amb}}
=
\lambda_{\max}
\left(
L_1^{\mathrm{amb}}
\right).
\]
For the \(15\)-node candidate clique complex considered here,
\[
n=15,
\qquad
\Lambda_1^{\mathrm{amb}}=15,
\qquad
\alpha_1=\frac{1}{15}.
\]
Let
\[
M
=
I-
\frac{L_1^{\mathrm{soft}}}
{\Lambda_1^{\mathrm{amb}}}.
\]
The \(W_1\)-weighted degree-\(d\) polynomial spectral mass is
\[
S_{1,d}^{W}
=
\operatorname{Tr}
\left(
W_1M^d
\right)
=
\sum_e p_e(M^d)_{ee}.
\]
The effective soft number of edges is
\[
N_1^{\mathrm{eff}}
=
\operatorname{Tr}(W_1)
=
\sum_ep_e.
\]
We therefore optimize the normalized moment
\[
\overline S_{1,d}^{W}
=
\frac{
\operatorname{Tr}
\left(
W_1M^d
\right)
}{
\operatorname{Tr}(W_1)+\delta
},
\qquad
\delta=10^{-6}.
\]
In the hard limit, \(W_1\) approaches the indicator of the active edge directions, so inactive ambient directions do not contribute artificial zero modes to the polynomial trace.

Given a prescribed soft spectral target \(\bar{s}_1^\star\), the topological loss is
\[
\mathcal L_{\mathrm{top}}
=
\frac12
\left(
\overline S_{1,d}^{W}
-
\bar{s}_1^\star
\right)^2.
\]

The implementation uses the exact analytical backward computation derived in the Appendix.
In particular, defining
\[
K_W
=
\frac1d
\sum_{k=0}^{d-1}
M^{d-1-k}W_1M^k,
\]
the directional derivative of the weighted numerator is
\[
\partial_u S_{1,d}^{W}
=
\operatorname{Tr}
\left[
(\partial_uW_1)M^d
\right]
-
d\alpha_1
\operatorname{Tr}
\left[
K_W
(\partial_uL_1^{\mathrm{soft}})
\right].
\]
Thus, the backward computation includes both the direct derivative of the \(W_1\)-weighted trace and the derivative propagated through the soft Hodge operator.

\subsection{Soft and Hard Targets}

The soft spectral target \(\bar{s}_1^\star\) and the hard normalized Betti target \(\bar{\beta}_1^\star\) are treated as distinct quantities.
Optimization is performed on \(\overline S_{1,d}^{W}\), whereas hard topology is evaluated by sampling \(128\) Bernoulli graphs from the optimized edge probabilities and computing their normalized first Betti numbers.
\footnote{
Bernoulli sampling provides a natural finite-soft interpretation of the optimized edge variables.
Under independent Bernoulli edge sampling with probabilities \(p_e\), the simplex weight
\[
w_\sigma=\prod_{e\subset\sigma}p_e
\]
is exactly the probability that all edges required for the clique simplex \(\sigma\) are present.
Thus, Bernoulli sampling preserves the probabilistic information carried by intermediate values \(p_e\in(0,1)\), whereas deterministic thresholding would collapse this information to a single binary graph.
This finite-soft probabilistic interpretation is complementary to the deterministic hard-limit analysis developed in the preceding sections.
}

At finite sigmoid softness and finite polynomial degree, we do not assume a universal one-to-one relation
\[
\bar{s}_1^\star
\longleftrightarrow
\bar{\beta}_1^\star.
\]
Instead, for each desired hard target, we first identify an Erd\H{o}s--R\'enyi-type reference probability \(p^\star\) whose Monte Carlo hard normalized first Betti number lies near the desired regime.
The corresponding soft target is then defined by
\[
\bar{s}_1^\star
=
\overline S_{1,d}^{W}
\bigl(
p_e=p^\star
\bigr).
\]
Thus, this calibration represents an experimental finite-problem correspondence rather than a universal analytic conversion rule.

The main experiments use \(24\) random initializations.
The initialization-noise scales are
\[
0.20,\qquad
0.35,\qquad
0.50,
\]
with eight runs at each level.
Within each comparison, the same initial graphs are reused across targets or optimization methods.

\subsection{Soft-Moment Optimization}

We first examine whether the weighted normalized soft spectral moment can be optimized consistently from different initial conditions.

The hard normalized Betti target is
$\bar{\beta}_1^\star=0.10$,
and we use \(d=8\).
The Erd\H{o}s--R\'enyi calibration gives
\[
p^\star=0.42,
\qquad
\bar{s}_1^\star=0.67471964.
\]
The edge logits are optimized using Adam with learning rate \(0.02\).
Because the numerical scale of the \(W_1\)-weighted objective differs from that of the unweighted polynomial trace, convergence is determined from the soft-target error:
\[
\left|
\overline S_{1,8}^{W}
-
\bar{s}_1^\star
\right|
<
10^{-4},
\]
with a maximum of \(160\) iterations.
Figure~\ref{fig:loss_convergence_weighted} shows the optimization trajectories over the \(24\) initializations, and Table~\ref{tab:weighted_betti_optimization} summarizes the aggregate results.

\begin{figure}[t]
    \centering
    \includegraphics[width=0.47\linewidth]{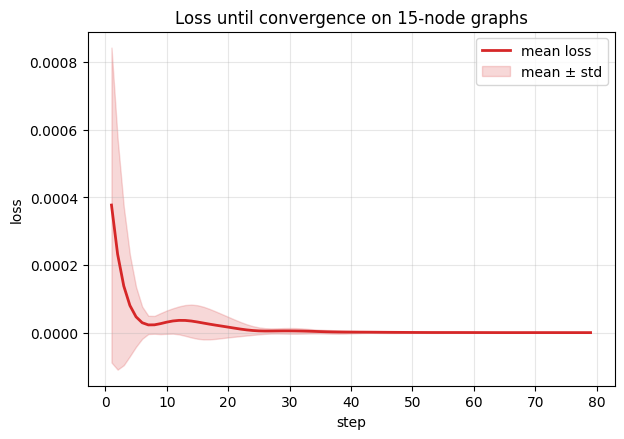}
    \hfill
    \includegraphics[width=0.47\linewidth]{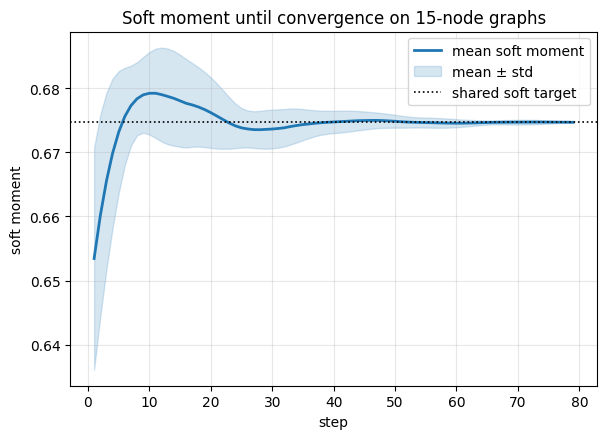}
    \caption{
    Optimization of the \(W_1\)-weighted normalized polynomial spectral moment over \(24\) random initializations on \(15\)-node graphs.
    Left: mean topological loss during optimization.
    Right: mean normalized weighted spectral moment.
    Shaded regions indicate one standard deviation across initializations, and the dotted line denotes the prescribed soft target.
    }
    \label{fig:loss_convergence_weighted}
\end{figure}

\begin{table}[t]
    \centering
    \caption{
    Aggregate results of the \(W_1\)-weighted soft-moment optimization over \(24\) initializations.
    Reported uncertainties are standard deviations across initializations.
    }
    \label{tab:weighted_betti_optimization}
    \begin{tabular}{l|c}
        \hline
        Quantity & Mean \(\pm\) std \\
        \hline
        Final loss
        & \(4.27\times10^{-7}\pm6.73\times10^{-7}\) \\
        Final weighted normalized moment
        & \(0.674839\pm0.000917\) \\
        Absolute soft-target error
        & \(0.000726\pm0.000572\) \\
        Absolute hard-target error
        & \(0.00707\pm0.00482\) \\
        \hline
    \end{tabular}
\end{table}

As shown in Figure~\ref{fig:loss_convergence_weighted}, the weighted soft spectral moment is driven close to the prescribed target across the tested initializations.
The mean hard-target errors at initialization-noise levels \(0.20\), \(0.35\), and \(0.50\) are
\[
0.00437\pm0.00427,
\qquad
0.00525\pm0.00291,
\qquad
0.01160\pm0.00343,
\]
respectively.
Although the hard-topology error tends to increase under larger initialization perturbations, convergence of the soft objective remains stable throughout the tested initialization family.
Thus, the observed optimization behavior is not tied to a single favorable initial graph.

\subsection{Topology Control Across Multiple Targets}

We next investigate whether the sampled hard topology changes systematically across different target regimes.
The hard normalized Betti targets are
$\bar{\beta}_1^\star
\in
\{0.02,0.05,0.10,0.20\}$,
and we use \(d=5\).
The calibrated reference probabilities and corresponding soft targets are listed in Table~\ref{tab:weighted_target_calibration}.

\begin{table}[t]
    \centering
    \caption{
    Calibration values used in the multi-target experiment.
    The soft target is obtained by evaluating the \(W_1\)-weighted normalized polynomial moment at the calibrated uniform reference probability \(p^\star\).
    }
    \label{tab:weighted_target_calibration}
    \begin{tabular}{c|c|c}
        \hline
        Hard target
        & Reference \(p^\star\)
        & Soft target \(\bar{s}_1^\star\) \\
        \hline
        0.02 & 0.53    & 0.577670 \\
        0.05 & 0.10625 & 0.942150 \\
        0.10 & 0.41    & 0.760477 \\
        0.20 & 0.21    & 0.899299 \\
        \hline
    \end{tabular}
\end{table}

The soft targets are not monotone in the hard target.
This is expected because the normalized first Betti number itself is not monotone in graph density, and the finite-soft, finite-degree spectral surrogate depends on the spectral structure of each target regime rather than only on edge density.
For all \(24\) initializations, the same initial soft graph is reused across the four targets.
The mean normalized first Betti number of the initial graphs is
$0.17491\pm0.00899$,
so the experiment includes both decreases and increases relative to the initial topology.
Figure~\ref{fig:weighted_multitarget} compares the prescribed hard targets with the final sampled normalized first Betti numbers.
The numerical results are summarized in Table~\ref{tab:weighted_topology_control}.

\begin{figure}[t]
    \centering
    \includegraphics[width=0.72\linewidth]{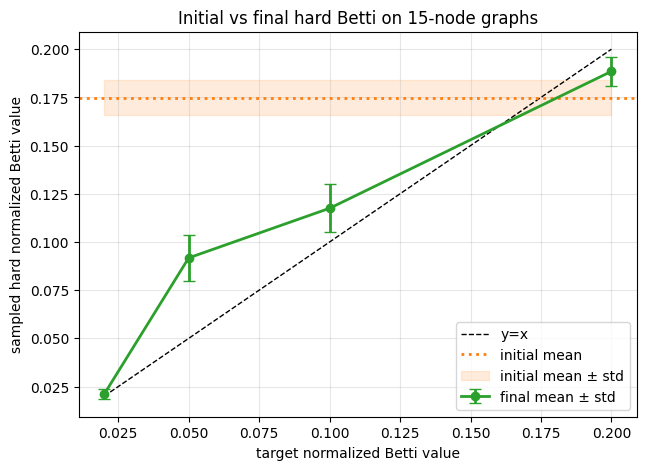}
    \caption{
    Hard-topology control across four target regimes using the \(W_1\)-weighted normalized polynomial spectral objective.
    Each point aggregates \(24\) optimization runs.
    The dashed line indicates \(y=x\).
    The horizontal reference indicates the mean topology of the shared initial graphs, while the final markers show the mean and standard deviation of the sampled hard normalized first Betti numbers.
    }
    \label{fig:weighted_multitarget}
\end{figure}

\begin{table}[t]
    \centering
    \caption{
    Hard-topology results for the multi-target \(W_1\)-weighted spectral-moment experiment.
    Each entry aggregates \(24\) optimization runs.
    }
    \label{tab:weighted_topology_control}
    \begin{tabular}{c|c|c}
        \hline
        Hard target
        & Final normalized \(\beta_1\)
        & Mean absolute hard error \\
        \hline
        0.02 & \(0.02102\pm0.00258\) & \(0.00223\pm0.00164\) \\
        0.05 & \(0.09175\pm0.01179\) & \(0.04175\pm0.01179\) \\
        0.10 & \(0.11750\pm0.01242\) & \(0.01773\pm0.01208\) \\
        0.20 & \(0.18851\pm0.00771\) & \(0.01200\pm0.00690\) \\
        \hline
    \end{tabular}
\end{table}

The corresponding mean absolute soft-target errors are approximately
\[
0.000462,\qquad
0.014273,\qquad
0.000706,\qquad
0.000309
\]
for hard targets \(0.02\), \(0.05\), \(0.10\), and \(0.20\), respectively.
As shown in Figure~\ref{fig:weighted_multitarget}, the final mean hard normalized first Betti numbers satisfy
\[
0.0210
<
0.0918
<
0.1175
<
0.1885,
\]
and therefore reproduce the ordering of the prescribed hard target regimes.
This indicates that the weighted spectral objective provides a meaningful directional signal for changing the hard topology.

Target-matching accuracy, however, is not uniform across regimes.
For \(\bar{\beta}_1^\star=0.05\), the soft objective itself does not fully reach its calibrated target within the specified optimization budget, and the final hard topology remains near \(0.092\).
For the \(0.10\) target, the final hard mean is also somewhat higher than the nominal target.
The multi-target experiment should therefore not be interpreted as demonstrating an exact finite-soft conversion between arbitrary hard Betti targets and calibrated spectral targets.
Rather, the results show that the spectral surrogate is directly differentiable and controllable, and that systematic changes in the surrogate are accompanied by systematic changes in the sampled hard topology.

\subsection{Joint Topology and Graph-Feature Control}

Finally, we examine whether the \(W_1\)-weighted topological objective can be combined with a conventional graph-feature objective.

As the auxiliary graph statistic, we use the soft degree variance.
For vertex \(i\), define
$d_i
=
\sum_{j\neq i}p_{ij},
~
\bar d
=
\frac1n
\sum_i d_i$,
and
\[
V(p)
=
\frac1n
\sum_i
(d_i-\bar d)^2.
\]
We fix the hard normalized Betti target at
$\bar{\beta}_1^\star=0.10$
and consider degree-variance targets
$V^\star
\in
\{0.41,0.45,0.49\}$.
For this experiment,
$d=5,
~
p^\star=0.41$,
so the topology soft target is
$\bar{s}_1^\star
=
0.760477$.

The joint objective is
\[
\mathcal L_{\mathrm{joint}}
=
\frac12
\left(
\overline S_{1,5}^{W}
-
\bar{s}_1^\star
\right)^2
+
\eta
\frac12
\left(
V(p)-V^\star
\right)^2,
\qquad
\eta=0.04.
\]
The loss weight is rebalanced relative to the earlier unweighted formulation because the numerical scale of the \(W_1\)-weighted normalized spectral objective is different.
For each variance target, the variance-only and joint optimizations are initialized from the same \(24\) soft graphs.
Figure~\ref{fig:weighted_joint_control} compares the resulting hard topology and achieved degree variance, and Table~\ref{tab:weighted_joint_objective} summarizes the aggregate numerical results.

\begin{figure}[t]
    \centering
    \includegraphics[width=0.47\linewidth]{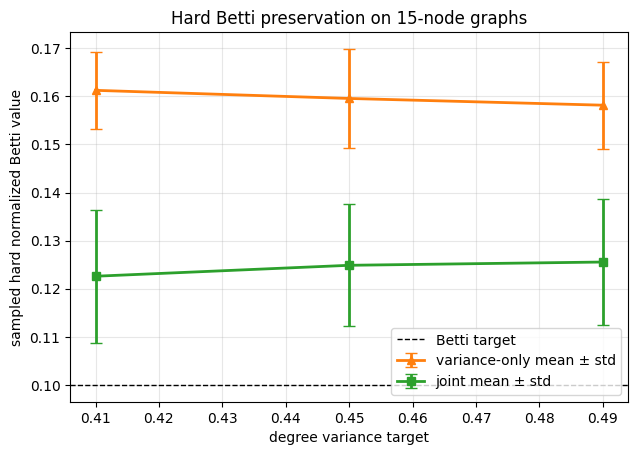}
    \hfill
    \includegraphics[width=0.47\linewidth]{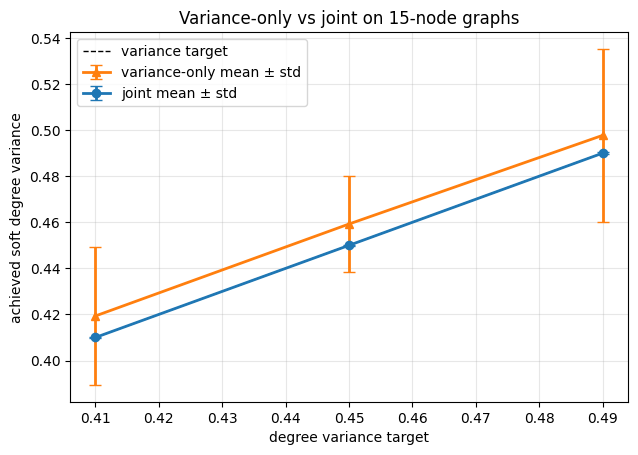}
    \caption{
    Joint topology and graph-feature control over \(24\) random initializations.
    Left: sampled hard normalized first Betti number under variance-only and joint optimization.
    Right: achieved soft degree variance.
    Error bars indicate one standard deviation across initializations.
    }
    \label{fig:weighted_joint_control}
\end{figure}

\begin{table}[t]
    \centering
    \caption{
    Comparison of variance-only and joint optimization over \(24\) initializations.
    }
    \label{tab:weighted_joint_objective}
    \small
    \begin{tabular}{c|c|c|c|c}
        \hline
        Var. target
        & Method
        & Soft variance
        & Hard normalized \(\beta_1\)
        & Mean abs. hard error \\
        \hline
        0.41
        & Variance-only
        & \(0.41934\pm0.02991\)
        & \(0.16124\pm0.00795\)
        & 0.06124 \\
        0.41
        & Joint
        & \(0.410013\pm0.000222\)
        & \(0.12259\pm0.01383\)
        & 0.02259 \\
        \hline
        0.45
        & Variance-only
        & \(0.45924\pm0.02062\)
        & \(0.15955\pm0.01030\)
        & 0.05955 \\
        0.45
        & Joint
        & \(0.449982\pm0.000255\)
        & \(0.12487\pm0.01271\)
        & 0.02487 \\
        \hline
        0.49
        & Variance-only
        & \(0.49768\pm0.03749\)
        & \(0.15815\pm0.00902\)
        & 0.05815 \\
        0.49
        & Joint
        & \(0.489977\pm0.000588\)
        & \(0.12555\pm0.01303\)
        & 0.02569 \\
        \hline
    \end{tabular}
\end{table}

As shown in the right panel of Figure~\ref{fig:weighted_joint_control}, the joint optimization reaches all three degree-variance targets with high accuracy.
At the same time, the topological term systematically shifts the sampled hard normalized first Betti number toward the desired regime.
Under variance-only optimization, the hard normalized first Betti number remains near $0.16$,
whereas with the joint objective it is reduced to approximately
$0.123\text{--}0.126$.
Thus, although the hard topology does not exactly reach the nominal target \(0.10\), the joint objective substantially reduces the hard-topology error while preserving highly accurate degree-variance control.
This indicates that the \(W_1\)-weighted spectral-moment term contains topological information that is not recovered by the degree-variance objective alone.
The remaining deviation from the hard target also indicates that joint topology control is sensitive to objective weighting, calibration, and initialization.

\subsection{Discussion and Limitations}

The \(H_1\) experiments provide three main observations.
First, the theoretically motivated \(W_1\)-weighted normalized polynomial spectral moment is analytically differentiable with respect to the edge logits and can be optimized accurately toward a prescribed soft target across the tested initial conditions, as illustrated in Figure~\ref{fig:loss_convergence_weighted}.
Second, changes in the soft spectral surrogate are accompanied by systematic changes in the sampled hard normalized first Betti number.
As shown in Figure~\ref{fig:weighted_multitarget}, the ordering of the target regimes is reflected in the ordering of the final hard topology.
However, agreement with the target is not uniform across all regimes, and the finite-soft calibrated relation between a soft spectral target and a desired hard Betti target should be distinguished from the asymptotic consistency established in the theoretical analysis.
Third, the weighted spectral objective can be combined with a conventional graph-statistical objective.
As demonstrated in Figure~\ref{fig:weighted_joint_control}, adding the topological term shifts the hard topology substantially toward the desired regime while maintaining highly accurate degree-variance control.

The use of \(24\) initializations across three perturbation levels does not constitute an exhaustive robustness study, but it reduces the possibility that the observed behavior is specific to a single favorable initialization.
The increasing hard-topology error observed for some larger initialization perturbations also indicates that optimization conditioning and finite-regime calibration remain important practical considerations.
The mathematical construction itself extends directly to arbitrary homological degree \(q\).
However, higher-dimensional optimization may require additional care because the chain-space dimensions grow combinatorially and because the simplex weights
$w_\sigma
=
\prod_{e\subset\sigma}p_e$
contain an increasing number of probability factors, which can weaken both the simplex weights and their gradients.
We therefore leave systematic higher-dimensional numerical optimization for future work.

The present results should consequently be viewed as proof-of-concept demonstrations on small graphs.
The soft spectral targets are calibrated separately for each hard-topology regime rather than obtained from a universal finite-soft conversion rule, and the dependence on polynomial degree, optimization budget, initialization, and loss weighting has not yet been studied exhaustively.
Nevertheless, the experiments demonstrate that the \(W_q\)-weighted normalized polynomial spectral moment derived in the theoretical analysis provides a usable differentiable topological signal in graph optimization.
The results support the central mechanism of the proposed framework: low-frequency Hodge spectral information can be converted into a continuous optimization signal whose variation is reflected in the topology of the resulting hard graph distribution.

\section{Summary and Discussion}

In this work, we proposed a framework for incorporating homological constraints into differentiable optimization through the low-frequency spectrum of Hodge-Laplacian-type spectral relaxations.
The central idea is to replace the discontinuous counting of exact topological zero modes by smooth spectral quantities that can be differentiated with respect to the underlying geometric or graph parameters.
Theoretically, the soft Hodge operator approaches the Hodge Laplacian of the corresponding hard complex, and eigenvalue perturbation bounds show that hard topological zero modes persist as near-zero spectral modes in the nearby soft regime.
Accordingly, the proposed spectral quantities should be interpreted as differentiable surrogates for homological structure rather than as new exact topological invariants.

For graph clique complexes, we used the \(W_q\)-weighted normalized polynomial spectral moment
\[
\overline{S}_{q,d}
=
\frac{
\operatorname{Tr}
\left[
W_q
\left(
I-
\frac{L_q^{\mathrm{soft}}}
{\Lambda_q^{\mathrm{amb}}}
\right)^d
\right]
}{
\operatorname{Tr}(W_q)+\delta
},
\]
where \(\Lambda_q^{\mathrm{amb}}\) is a fixed spectral scale determined by the ambient clique complex.
The \(W_q\) factor suppresses inactive simplex directions in the fixed ambient representation and yields a quantity that is consistent with the normalized Betti number in the hard and sufficiently sharp spectral limit.
The numerical graph experiments focused on \(H_1\).
Across multiple initializations and multiple target regimes, optimization of the soft spectral objective produced systematic changes in the sampled hard normalized first Betti number.
The joint experiments further showed that the topological objective can be combined with an ordinary graph-statistical objective, such as degree variance, allowing topological and conventional structural constraints to be controlled simultaneously.

For Vietoris--Rips complexes, we considered heat- and resolvent-based Hodge spectral objectives and compared their optimization signals with persistence-based objectives.
The purpose of this comparison is not to replace persistent homology.
Persistent homology remains a natural representation when birth--death structure and multiscale persistence summaries are the primary objects of interest.
The present framework instead focuses on settings in which topology itself is used as an optimization signal.
In the controlled experiments considered here, Hodge spectral objectives produced more spatially distributed gradients and, in some cases, update directions that were more strongly aligned with prescribed geometric deformations.
These observations demonstrate that Hodge spectral losses and persistence-based losses can provide qualitatively different optimization information, but they should not be interpreted as establishing a general performance advantage of one approach over the other.

The theoretical construction is not restricted to first homology.
The soft Hodge operator, the \(W_q\)-weighted normalized spectral moment, and the corresponding backward computation are defined for arbitrary homological degree \(q\).
Thus, extending the framework from \(H_1\) to higher-dimensional homology does not require a fundamentally new theory.
However, practical optimization at higher degrees may require additional numerical and algorithmic care.
The dimensions of the chain spaces and the numbers of candidate simplices grow combinatorially with \(q\) and the problem size, increasing the cost of constructing and manipulating higher-order Hodge operators.

A second practical issue arises from the soft clique-simplex parameterization
\[
w_\sigma
=
\prod_{e\subset\sigma}p_e.
\]
As the simplex dimension increases, the number of probability factors in this product also increases.
When the individual edge probabilities are not close to one, the resulting simplex weight can become small, and the associated derivatives
\[
\frac{\partial w_\sigma}{\partial p_e}
=
\prod_{\substack{e'\subset\sigma\\e'\neq e}}p_{e'}
\]
may also become weak.
Thus, while the higher-dimensional extension is theoretically straightforward, its practical optimization may suffer from poor conditioning or attenuated gradients.
Possible remedies include alternative simplex-weight parameterizations, continuation or temperature schedules, gradient rescaling, adaptive optimization, and other conditioning techniques.
These issues concern the practical optimization of the framework rather than its underlying theoretical validity.

Computational scalability is another important limitation.
The current proof-of-concept implementation relies largely on dense ambient matrix operations, while the numbers of candidate simplices grow rapidly with the problem size.
Future implementations should exploit sparse linear algebra, polynomial or Chebyshev approximations, iterative spectral methods, and stochastic trace estimation in order to avoid full dense eigendecomposition or matrix-function evaluation.
The polynomial-trace formulation is particularly compatible with such approximations because it can be evaluated through repeated matrix--vector products rather than explicit spectral decomposition.

The same spectral formulation also suggests a possible longer-term connection to quantum computation.
Polynomial spectral filtering, trace estimation, and Betti-number estimation are related to several existing quantum linear-algebraic ideas.
This raises the possibility that parts of the present framework could eventually be combined with quantum spectral estimation or quantum trace-estimation procedures.
However, no quantum algorithm is implemented in this work, and no claim of quantum computational advantage is made.
The quantum connection should therefore be regarded only as a possible future computational direction.

From an application perspective, the framework is relevant to problems in which a structure must be \emph{designed} to satisfy a desired topology rather than merely analyzed after it has been constructed.
Potential examples include topology-constrained point-cloud and geometric design, graph and network generation with prescribed higher-order structure, and optimization of porous or cellular geometries with desired connectivity or cavity structure.
Related opportunities may arise in communication and sensor networks, material and structural design, and generative models whose outputs are required to satisfy global topological constraints.
In such settings, the Hodge spectral objective can act as a differentiable topological regularizer alongside conventional geometric, physical, or graph-statistical objectives.
The joint graph experiment provides a simple proof of concept for this type of multi-objective design.

In summary, the proposed framework establishes the chain
\[
\boxed{
\text{hard topology}
\;\longrightarrow\;
\text{low-frequency Hodge spectrum}
\;\longrightarrow\;
\text{differentiable optimization signal}.
}
\]
The theory provides a quantitative connection between hard Betti information and smooth spectral surrogates, while the numerical experiments demonstrate that these surrogates can serve as practical optimization objectives for graph clique complexes and Vietoris--Rips complexes.

\section*{Statements and Declarations}

\subsection*{Funding}
The authors declare that no funds, grants, or other support were received during the preparation of this manuscript.

\subsection*{Competing interests}
The authors have no relevant financial or non-financial interests to disclose.

\subsection*{Author contributions}
S.K. conceived the study, developed the theoretical framework, and carried out the numerical computations. Y.S. made significant contributions through extensive discussions with S.K. and provided important guidance for refining the direction of the study. All authors reviewed and approved the final manuscript.

\subsection*{Data availability}
The datasets generated and analyzed during the current study are available from the corresponding author upon reasonable request.

\subsection*{Code availability}
The code used for the numerical experiments is available from the corresponding author upon reasonable request.

\subsection*{Ethics approval}
Not applicable.

\subsection*{Consent to participate}
Not applicable.

\subsection*{Consent for publication}
Not applicable.

\begin{appendices}

\section{Backward Computation and Implementation Details}
\label{app:backward}

This appendix gives the backward computations used in the numerical experiments. We retain the full differentiation chain needed to reproduce the gradients, while omitting derivations that duplicate the main text and implementation variants not used in the principal experiments. Two closely related operators are used. The general ambient construction is
\[
\widehat L_q
=
\widetilde B_q^\top\widetilde B_q
+
\widetilde B_{q+1}\widetilde B_{q+1}^\top
+
\mu(I-W_q),
\]
whereas the graph polynomial-moment experiments use the penalty-free weighted-current operator
\[
L_q^{\mathrm{wc}}
=
\widetilde B_q^\top\widetilde B_q
+
\widetilde B_{q+1}\widetilde B_{q+1}^\top.
\]
For a scalar loss \(\mathcal J\) and a variable \(z\), we write
\(g_z=\partial\mathcal J/\partial z\).

\subsection{Soft simplex weights and weighted boundary operators}

For each candidate edge \(e\), let
\(p_e=\sigma(a_e)=(1+\exp(-a_e))^{-1}\).
For a simplex \(\sigma\) of dimension at least one,
\[
w_\sigma=\prod_{e\subset\sigma}p_e,
\]
while vertex weights are fixed to one. Define
\[
W_p
=
\operatorname{diag}(w_\sigma)_{\sigma\in K_{\max}^{(p)}},
\qquad
R_p=W_p^{1/2}
=
\operatorname{diag}(\rho_\sigma),
\qquad
\rho_\sigma=\sqrt{w_\sigma},
\]
and
\[
\widetilde B_p=R_{p-1}B_pR_p.
\]

For the weighted polynomial observable, an inactive-direction penalty is not required in the graph implementation. In the hard limit,
\[
L_q^{\mathrm{wc,hard}}=L_q(K)\oplus 0,
\qquad
W_q^{\mathrm{hard}}=I_{|K^{(q)}|}\oplus 0,
\]
and hence
\[
\operatorname{Tr}\!\left[
W_q^{\mathrm{hard}}
f(L_q^{\mathrm{wc,hard}})
\right]
=
\operatorname{Tr}f(L_q(K)).
\]
Thus inactive ambient simplex directions make no contribution to the weighted observable.

For polynomial filtering, let
\[
L_q^{\mathrm{ref}}
=
B_q^\top B_q+B_{q+1}B_{q+1}^\top,
\qquad
\Lambda_q
=
\lambda_{\max}(L_q^{\mathrm{ref}}),
\qquad
\eta_q=\Lambda_q^{-1}.
\]
Because the ambient candidate complex is fixed, \(d\Lambda_q=0\) during optimization. Moreover,
\[
L_q^{\mathrm{wc}}
=
R_q
\left[
B_q^\top W_{q-1}B_q
+
B_{q+1}W_{q+1}B_{q+1}^\top
\right]
R_q.
\]
Since \(0\preceq W_p\preceq I\), the bracketed matrix is bounded above by
\(L_q^{\mathrm{ref}}\), and therefore
\[
0\preceq L_q^{\mathrm{wc}}\preceq \Lambda_q I.
\]
Consequently,
\[
M=I-\eta_qL_q^{\mathrm{wc}}
\]
has spectrum in \([0,1]\), and no differentiation through a parameter-dependent spectral radius is required.

\subsection{Gradients of general low-pass spectral objectives}

Let \(L=L^\top\) and \(S_f(L)=\operatorname{Tr}f(L)\). For a differentiable scalar spectral function \(f\),
\[
dS_f
=
\operatorname{Tr}[f'(L)dL],
\qquad
\frac{\partial S_f}{\partial L}
=
f'(L).
\]
For the trace-matching loss
\[
\mathcal J_{\mathrm{trace}}
=
\frac12\left(S_f(L)-s^\star\right)^2,
\]
the matrix gradient is
\[
G_L
=
\frac{\partial\mathcal J_{\mathrm{trace}}}{\partial L}
=
\left(S_f(L)-s^\star\right)f'(L).
\]

For the heat filter
\(f_{\mathrm{heat}}(\lambda)=e^{-\lambda/\tau}\),
\[
f_{\mathrm{heat}}'(L)
=
-\frac1\tau e^{-L/\tau},
\qquad
G_L^{\mathrm{heat}}
=
-\frac{S_{\mathrm{heat}}(L)-s^\star}{\tau}
e^{-L/\tau}.
\]
For the resolvent filter
\(f_{\mathrm{res}}(\lambda)
=
\alpha_{\mathrm{res}}(\lambda+\alpha_{\mathrm{res}})^{-1}\),
\[
f_{\mathrm{res}}'(L)
=
-\alpha_{\mathrm{res}}
(L+\alpha_{\mathrm{res}}I)^{-2},
\]
and hence
\[
G_L^{\mathrm{res}}
=
-\left(S_{\mathrm{res}}(L)-s^\star\right)
\alpha_{\mathrm{res}}
(L+\alpha_{\mathrm{res}}I)^{-2}.
\]
Promotion or suppression objectives use the same inner derivatives multiplied by the derivative of the corresponding outer scalar objective.

\subsection{Matrix-valued spectral-filter matching}

For a known target operator \(L_{\mathrm{tar}}\), consider
\[
\mathcal J_{\mathrm{sf}}
=
\frac12
\left\|
f(L)-f(L_{\mathrm{tar}})
\right\|_F^2,
\qquad
D=f(L)-f(L_{\mathrm{tar}}).
\]
Let
\[
L=U\Lambda U^\top,
\qquad
\Lambda=\operatorname{diag}(\lambda_1,\ldots,\lambda_N).
\]
The Fr\'echet derivative of \(f\) is represented by the Loewner matrix
\[
(K_f)_{ij}
=
\begin{cases}
\dfrac{f(\lambda_i)-f(\lambda_j)}
{\lambda_i-\lambda_j},
& i\neq j,\\[2mm]
f'(\lambda_i),
& i=j.
\end{cases}
\]
Then
\[
U^\top(df(L))U
=
K_f\odot
\left(U^\top(dL)U\right).
\]
Using
\(d\mathcal J_{\mathrm{sf}}
=
\operatorname{Tr}[D^\top df(L)]\),
we obtain
\[
G_L
=
U
\left[
K_f\odot
\left(U^\top D U\right)
\right]
U^\top.
\]
For the resolvent filter, this can be written equivalently as
\[
G_L
=
-\alpha_{\mathrm{res}}
(L+\alpha_{\mathrm{res}}I)^{-1}
D
(L+\alpha_{\mathrm{res}}I)^{-1}.
\]
These gradients are propagated through the soft Hodge operator in the Vietoris--Rips experiments.

\subsection{\texorpdfstring{\(W_q\)-weighted polynomial spectral moment}{Wq-weighted polynomial spectral moment}}

Set
\[
L=L_q^{\mathrm{wc}},
\qquad
M=I-\eta_qL,
\qquad
S_{q,d}^{W}
=
\operatorname{Tr}(W_qM^d).
\]
The effective number of active \(q\)-simplices and the normalized moment are
\[
N_q^{\mathrm{eff}}
=
\operatorname{Tr}W_q,
\qquad
Z_q
=
N_q^{\mathrm{eff}}+\delta,
\qquad
\overline S_{q,d}^{W}
=
\frac{S_{q,d}^{W}}{Z_q}.
\]
For a prescribed soft target \(\bar s_q^\star\),
\[
\mathcal J_{\mathrm{top}}
=
\frac12
\left(
\overline S_{q,d}^{W}
-
\bar s_q^\star
\right)^2,
\qquad
r_q
=
\overline S_{q,d}^{W}
-
\bar s_q^\star.
\]
The target \(\bar s_q^\star\) is the target of the differentiable soft objective and is distinct from the sampled hard normalized Betti target used for evaluation.

\subsubsection{Differential of the weighted moment}

Starting from
\[
dS_{q,d}^{W}
=
\operatorname{Tr}[(dW_q)M^d]
+
\operatorname{Tr}[W_q\,d(M^d)],
\]
use
\[
d(M^d)
=
\sum_{k=0}^{d-1}
M^k(dM)M^{d-1-k},
\qquad
dM=-\eta_q\,dL.
\]
Cyclic invariance of the trace gives
\[
dS_{q,d}^{W}
=
\operatorname{Tr}[M^d dW_q]
-
\eta_q
\sum_{k=0}^{d-1}
\operatorname{Tr}
\left[
M^{d-1-k}
W_q
M^k
dL
\right].
\]
Define
\[
K_{q,d}^{W}
=
\frac1d
\sum_{k=0}^{d-1}
M^{d-1-k}
W_q
M^k.
\]
Because \(M\) and \(W_q\) are symmetric,
\((K_{q,d}^{W})^\top=K_{q,d}^{W}\), and therefore
\[
dS_{q,d}^{W}
=
\operatorname{Tr}[M^d dW_q]
-
d\eta_q
\operatorname{Tr}
[K_{q,d}^{W}dL].
\]
The first term is the direct weighting path \(W_q\to S_{q,d}^{W}\), whereas the second is the Hodge-operator path \(W_p\to L_q^{\mathrm{wc}}\to M\to S_{q,d}^{W}\). Both are included in the implementation.

\subsubsection{Gradient with respect to the soft Hodge operator}

When differentiating with respect to \(L\), \(W_q\) and \(Z_q\) are held fixed. Hence
\[
\frac{\partial S_{q,d}^{W}}{\partial L}
=
-d\eta_qK_{q,d}^{W},
\]
and
\[
G_L
=
\frac{\partial\mathcal J_{\mathrm{top}}}{\partial L}
=
-r_q
\frac{d\eta_q}{Z_q}
K_{q,d}^{W}.
\]
For \(W_q\neq I\), the averaged inserted kernel \(K_{q,d}^{W}\), rather than the unweighted factor \(M^{d-1}\), is the correct matrix derivative.

\subsubsection{Direct gradient with respect to the \(q\)-simplex weights}

Because
\(W_q=\operatorname{diag}(w_\sigma)_{\sigma\in K_{\max}^{(q)}}\),
\[
\operatorname{Tr}[M^d dW_q]
=
\sum_{\sigma\in K_{\max}^{(q)}}
(M^d)_{\sigma\sigma}\,dw_\sigma,
\]
while
\[
dZ_q
=
d\operatorname{Tr}W_q
=
\sum_{\sigma\in K_{\max}^{(q)}}dw_\sigma.
\]
Using
\(\overline S_{q,d}^{W}=S_{q,d}^{W}/Z_q\),
the direct contribution is
\[
\left.
\frac{\partial\overline S_{q,d}^{W}}
{\partial w_\sigma}
\right|_{\mathrm{direct}}
=
\frac{(M^d)_{\sigma\sigma}}{Z_q}
-
\frac{S_{q,d}^{W}}{Z_q^2}
=
\frac{
(M^d)_{\sigma\sigma}
-
\overline S_{q,d}^{W}
}{Z_q}.
\]
Therefore
\[
g_{w_\sigma}^{\mathrm{direct}}
=
\frac{r_q}{Z_q}
\left[
(M^d)_{\sigma\sigma}
-
\overline S_{q,d}^{W}
\right],
\qquad
\sigma\in K_{\max}^{(q)}.
\]
This term is absent from an unweighted trace objective and is essential for consistency with the \(W_q\)-weighted implementation.

\subsection{Backward propagation through the weighted Hodge operator}

Let
\[
A=\widetilde B_q,
\qquad
C=\widetilde B_{q+1},
\qquad
L=A^\top A+CC^\top.
\]
Then
\[
dL
=
(dA)^\top A
+
A^\top(dA)
+
(dC)C^\top
+
C(dC)^\top.
\]
Let \(G_L=\partial\mathcal J/\partial L\), symmetrized so that \(G_L^\top=G_L\). By matching
\(d\mathcal J=\operatorname{Tr}(G_L^\top dL)\),
\[
G_A=2AG_L,
\qquad
G_C=2G_LC.
\]
Thus
\[
G_{\widetilde B_q}
=
2\widetilde B_qG_L,
\qquad
G_{\widetilde B_{q+1}}
=
2G_L\widetilde B_{q+1}.
\]

If the full ambient operator
\[
\widehat L_q
=
L_q^{\mathrm{wc}}
+
\mu(I-W_q)
\]
is differentiated, the penalty contributes directly to the \(q\)-simplex weights:
\[
g_{w_\sigma}^{\mathrm{pen}}
=
-\mu(G_L)_{\sigma\sigma},
\qquad
\sigma\in K_{\max}^{(q)}.
\]
This contribution is absent from the penalty-free graph polynomial-moment implementation.

\subsection{From weighted boundaries to simplex weights}

Recall
\[
\widetilde B_p
=
R_{p-1}B_pR_p.
\]
In components,
\[
(\widetilde B_p)_{\tau\sigma}
=
\rho_\tau(B_p)_{\tau\sigma}\rho_\sigma.
\]
Let
\(G_{\widetilde B_p}
=
\partial\mathcal J/\partial\widetilde B_p\).
The contributions to the lower- and upper-dimensional square-root weights are
\[
g_{\rho_{p-1}}^{(p)}
=
(G_{\widetilde B_p}\odot B_p)\rho_p,
\qquad
g_{\rho_p}^{(p)}
=
(G_{\widetilde B_p}\odot B_p)^\top\rho_{p-1}.
\]
For the degree-\(q\) Hodge operator, the contributions from
\(\widetilde B_q\) and \(\widetilde B_{q+1}\) combine as
\[
g_{\rho_{q-1}}
=
(G_{\widetilde B_q}\odot B_q)\rho_q,
\]
\[
g_{\rho_q}
=
(G_{\widetilde B_q}\odot B_q)^\top\rho_{q-1}
+
(G_{\widetilde B_{q+1}}\odot B_{q+1})\rho_{q+1},
\]
and
\[
g_{\rho_{q+1}}
=
(G_{\widetilde B_{q+1}}\odot B_{q+1})^\top\rho_q.
\]
Since
\[
\rho_\sigma=\sqrt{w_\sigma},
\qquad
\frac{\partial\rho_\sigma}{\partial w_\sigma}
=
\frac1{2\rho_\sigma},
\]
the contribution through the weighted boundaries is
\[
g_{w_\sigma}^{\mathrm{bdry}}
=
\frac{g_{\rho_\sigma}}{2\rho_\sigma}.
\]

For the weighted polynomial moment, the total \(q\)-simplex gradient is
\[
g_{w_\sigma}
=
g_{w_\sigma}^{\mathrm{bdry}}
+
g_{w_\sigma}^{\mathrm{direct}},
\qquad
\sigma\in K_{\max}^{(q)}.
\]
For simplex dimensions \(p\neq q\),
\(g_{w_\sigma}=g_{w_\sigma}^{\mathrm{bdry}}\).
For a general ambient spectral loss, the penalty contribution is added whenever the term \(\mu(I-W_q)\) is present; if several mechanisms are combined, all applicable direct terms are included.

\subsection{From simplex weights to edge probabilities and logits}

For every simplex,
\[
w_\sigma
=
\prod_{e\subset\sigma}p_e.
\]
If \(e\subset\sigma\),
\[
\frac{\partial w_\sigma}{\partial p_e}
=
\prod_{\substack{e'\subset\sigma\\e'\neq e}}p_{e'},
\]
or, when \(p_e>0\),
\[
\frac{\partial w_\sigma}{\partial p_e}
=
\frac{w_\sigma}{p_e}.
\]
Hence
\[
g_{p_e}
=
\sum_{\sigma\supseteq e}
g_{w_\sigma}
\frac{\partial w_\sigma}{\partial p_e},
\]
where the sum includes all simplex dimensions entering the Hodge operator and the objective. Since
\[
p_e=\sigma(a_e),
\qquad
\frac{\partial p_e}{\partial a_e}
=
p_e(1-p_e),
\]
the edge-logit gradient is
\[
g_{a_e}
=
g_{p_e}p_e(1-p_e).
\]
For numerical stability, the implementation uses clipped probabilities or the product-of-other-edges form rather than division by a very small \(p_e\).

\subsection{Directional-derivative form used in the graph implementation}

The same derivatives are evaluated conveniently in directional form. Let
\(u=\{u_e\}\) be a direction in edge-logit space. Then
\[
\partial_up_e
=
p_e(1-p_e)u_e,
\]
and
\[
\partial_uw_\sigma
=
\sum_{e\subset\sigma}
\frac{\partial w_\sigma}{\partial p_e}
\partial_up_e.
\]
When all probabilities are nonzero, this is equivalently
\[
\partial_uw_\sigma
=
w_\sigma
\sum_{e\subset\sigma}
\frac{\partial_up_e}{p_e}.
\]
Furthermore,
\[
\partial_u\rho_\sigma
=
\frac{\partial_uw_\sigma}{2\rho_\sigma},
\]
and
\[
\partial_u\widetilde B_p
=
(\partial_uR_{p-1})B_pR_p
+
R_{p-1}B_p(\partial_uR_p).
\]

Writing
\(C_u=\partial_uL_q^{\mathrm{wc}}\),
\[
\begin{aligned}
C_u
={}&
(\partial_u\widetilde B_q)^\top\widetilde B_q
+
\widetilde B_q^\top(\partial_u\widetilde B_q)\\
&+
(\partial_u\widetilde B_{q+1})
\widetilde B_{q+1}^\top
+
\widetilde B_{q+1}
(\partial_u\widetilde B_{q+1})^\top.
\end{aligned}
\]
The weighted numerator derivative is
\[
\partial_uS_{q,d}^{W}
=
\operatorname{Tr}
[(\partial_uW_q)M^d]
-
d\eta_q
\operatorname{Tr}
[K_{q,d}^{W}C_u].
\]
The denominator derivative is
\[
\partial_uN_q^{\mathrm{eff}}
=
\operatorname{Tr}(\partial_uW_q).
\]
Therefore
\[
\partial_u\overline S_{q,d}^{W}
=
\frac{
\partial_uS_{q,d}^{W}
}{Z_q}
-
\frac{
S_{q,d}^{W}
}{Z_q^2}
\partial_uN_q^{\mathrm{eff}},
\]
and finally
\[
\partial_u\mathcal J_{\mathrm{top}}
=
r_q\,
\partial_u\overline S_{q,d}^{W}.
\]
The explicit term
\(\operatorname{Tr}[(\partial_uW_q)M^d]\)
is retained separately from the derivative through \(L_q^{\mathrm{wc}}\).

For the principal \(q=1\) graph experiments,
\[
W_1=\operatorname{diag}(p_e),
\qquad
R_1=\operatorname{diag}(\sqrt{p_e}),
\qquad
R_2=\operatorname{diag}(\sqrt{w_\triangle}),
\]
so
\[
\widetilde B_1=B_1R_1,
\qquad
\widetilde B_2=R_1B_2R_2,
\]
and
\[
L_1^{\mathrm{wc}}
=
\widetilde B_1^\top\widetilde B_1
+
\widetilde B_2\widetilde B_2^\top.
\]
With \(M=I-\eta_1L_1^{\mathrm{wc}}\),
\[
S_{1,d}^{W}
=
\operatorname{Tr}(W_1M^d)
=
\sum_ep_e(M^d)_{ee},
\qquad
N_1^{\mathrm{eff}}
=
\sum_ep_e.
\]
Thus
\[
\partial_uS_{1,d}^{W}
=
\sum_e
(\partial_up_e)(M^d)_{ee}
-
d\eta_1
\operatorname{Tr}
[K_{1,d}^{W}C_u],
\]
and
\[
\partial_uN_1^{\mathrm{eff}}
=
\sum_e\partial_up_e.
\]
These are the exact directional derivatives used in the main \(q=1\), \(n=15\) experiments.

\subsection{Joint optimization with soft degree variance}

For the auxiliary graph statistic, define
\[
d_i=\sum_{j\neq i}p_{ij},
\qquad
\bar d=\frac1n\sum_{i=1}^{n}d_i,
\qquad
V(p)
=
\frac1n
\sum_{i=1}^{n}
(d_i-\bar d)^2.
\]
For an edge \(e=(u,v)\),
\[
\frac{\partial V}{\partial p_e}
=
\frac2n
\left[
(d_u-\bar d)
+
(d_v-\bar d)
\right].
\]
Therefore
\[
\frac{\partial V}{\partial a_e}
=
\frac2n
\left[
(d_u-\bar d)
+
(d_v-\bar d)
\right]
p_e(1-p_e).
\]
For
\[
\mathcal J_{\mathrm{var}}
=
\frac12(V-V^\star)^2
\]
and
\[
\mathcal J_{\mathrm{joint}}
=
\mathcal J_{\mathrm{top}}
+
\lambda_{\mathrm{feat}}
\mathcal J_{\mathrm{var}},
\]
the edge-logit gradient is
\[
\nabla_a\mathcal J_{\mathrm{joint}}
=
\nabla_a\mathcal J_{\mathrm{top}}
+
\lambda_{\mathrm{feat}}
(V-V^\star)
\nabla_aV.
\]

\subsection{Backward computation for Vietoris--Rips point clouds}

For the Vietoris--Rips experiments, the variables are the point coordinates. At scale \(r\), define
\[
a_{ij}^{(r)}(X)
=
\frac{r-d_{ij}(X)}{\varepsilon},
\qquad
d_{ij}(X)
=
\sqrt{
\|x_i-x_j\|^2
+
\delta_{\mathrm{dist}}^2
},
\]
and
\[
p_{ij}^{(r)}
=
\sigma(a_{ij}^{(r)}).
\]
Suppose the spectral backward computation has produced
\[
g_{a_{ij}^{(r)}}
=
\frac{\partial\mathcal J}
{\partial a_{ij}^{(r)}}.
\]
Since
\[
\frac{\partial a_{ij}^{(r)}}{\partial d_{ij}}
=
-\frac1{\varepsilon},
\]
we obtain
\[
g_{d_{ij}}^{(r)}
=
-\frac1{\varepsilon}
g_{a_{ij}^{(r)}}.
\]
Moreover,
\[
\nabla_{x_i}d_{ij}
=
\frac{x_i-x_j}{d_{ij}}.
\]
Hence the point-coordinate gradient at scale \(r\) is
\[
\nabla_{x_i}\mathcal J^{(r)}
=
-\frac1{\varepsilon}
\sum_{j:(i,j)\in E_{\max}}
g_{a_{ij}^{(r)}}
\frac{x_i-x_j}{d_{ij}}.
\]
For the multi-scale objective
\[
\mathcal J(X)
=
\sum_{m=1}^{M}
\omega_m
\mathcal J_m(X),
\]
the gradients add:
\[
\nabla_{x_i}\mathcal J
=
-\frac1{\varepsilon}
\sum_{m=1}^{M}
\omega_m
\sum_{j:(i,j)\in E_{\max}}
g_{a_{ij}^{(r_m)}}
\frac{x_i-x_j}{d_{ij}}.
\]
Thus the heat, resolvent, and matrix-valued spectral objectives can all be propagated analytically to the point coordinates.

\subsection{Numerical stabilization}

The analytical derivations use
\(\rho_\sigma=\sqrt{w_\sigma}\).
Numerically, a small constant \(\epsilon_{\mathrm w}>0\) is added:
\[
\rho_\sigma
=
\sqrt{w_\sigma+\epsilon_{\mathrm w}},
\qquad
\frac{\partial\rho_\sigma}{\partial w_\sigma}
=
\frac1{2\sqrt{w_\sigma+\epsilon_{\mathrm w}}}.
\]
Expressions involving \(1/p_e\) are evaluated using clipped probabilities or the equivalent product-of-other-edges formula. Edge logits are also clipped to a finite interval to prevent overflow in the logistic function. These modifications affect only numerical stability and not the structure of the backward computation.

\end{appendices}


\bibliography{ref}

\end{document}